\documentclass[11pt]{amsart}
\usepackage[shortalphabetic]{amsrefs}
\usepackage{amsfonts}
\usepackage{amsmath}
\usepackage{amssymb}
\usepackage{amsthm}
\usepackage{pstricks}
\usepackage{pst-node}
\usepackage{pst-plot}

\def\d{\partial}

\def\Gbar{\overline{G}}

\def\Qq{\mathbb{Q}}
\def\Zz{\mathbb{Z}}

\newcommand{\aut}{\mathrm{Aut}}
\newcommand{\ct}{{\operatorname{ct}}}

\newcommand{\codim}{\operatorname{codim}}
\newcommand{\im}{\operatorname{im}}

\newcommand{\rt}{{\operatorname{rt}}}
\newcommand{\M}{\mathcal{M}}
\newcommand{\Mct}{\mathcal{M}^\ct}
\newcommand{\Mrt}{\mathcal{M}^\rt}
\newcommand{\Mbar}{\overline{\mathcal{M}}}

\newcommand{\Ss}{\Sigma}

\def\nr{2pt}

\def\gradius{.3}
\newcommand{\genusvertex}[3]{
  \rput#1{\cnode{\gradius}{#2}}\rput#1{\scriptsize $#3$}
}
\newcommand{\markedpoint}[3]{
  \psset{linewidth=0pt,linecolor=white}
  \rput#1{\cnode{4.5pt}{#2}}
  \rput#1{\scriptsize $#3$}
  \psset{linewidth=.8pt,linecolor=black}
}
\psset{unit=0.6cm}
\psset{arrowsize=4pt, arrowlength=.5}

\newtheorem{proposition}{Proposition}
\newtheorem{conjecture}{Conjecture}

\theoremstyle{definition}
\newtheorem{definition}{Definition}
\newtheorem{example}{Example}
\newtheorem{remark}{Remark}

\author{Stephanie Yang}
\address{Institutionen f\"or Matematik, Kungliga Tekniska H\"ogskolan, 100 44 Stockholm, Sweden}
\email{stpyang@math.kth.se}

\title{Calculating intersection numbers on moduli spaces of curves}

\begin{document}

\begin{abstract}
We discuss an algorithm for calculating intersection numbers for
tautological classes on $\Mbar_{g,n}$, and use this to compute the
coefficients of a genus 4 tautological relation in cohomology whose
existence follows directly from the work of Bergstr\"om-Tommasi.  We
end with the ranks of the graded parts of the Gorenstein quotients of
the tautological rings $R^\ast(\Mbar_{g,n})$, as well as of the
related rings $R^\ast(\Mct_{g,n})$ and $R^\ast(\Mrt_{g,n})$, for low
values of $g$ and $n$.
\end{abstract}

\maketitle

\tableofcontents

\section{Introduction}


Let $\Mbar_{g,n}$ denote the moduli space of Deligne-Mumford stable
curves of genus $g$ with $n$ labeled points.  The tautological rings
$R^*(\Mbar_{g,n})$ comprise the smallest system of $\Qq$-subalgebras
of the Chow rings $A^*(\Mbar_{g,n})$ that is closed under the natural
forgetful and gluing morphisms. These rings include the cotangent
classes $\psi_i$, the Mumford-Morita-Miller classes $\kappa_i$, the
Chern classes $\lambda_i$ of the Hodge bundle, and
topologically-defined boundary classes (for definitions and
properties, see \cites{MR1486986,MR2003030,MR717614}).

Programs for computing top intersection numbers among $\psi$,
$\kappa$, and $\lambda$ classes have been implemented in both Maple
and Macaulay2~\cites{KaLa5,HodgeIntegrals}. However, neither of these
handle boundary classes in a satisfactory manner. An algorithm for
doing so is described below in terms of decorated stable graphs.  This
is based on a formula given a paper by Graber and Pandharipande
(\cite{MR1960923}*{\S A.1}) and has has been implemented in Maple.
This program has been used to calculate the ranks of the Gorenstein
quotients of various tautological rings of moduli spaces of curves
$\Mbar_{g,n}$, $\Mct_{g,n}$, and $\Mrt_{g,n}$ for low values of $g$
and $n$, where $\Mct_{g,n}$ denote the locus of curves in
$\Mbar_{g,n}$ ``of compact type,'' whose dual graph is a tree, and
$\Mrt_{g,n}$ denotes the locus of curves in $\Mbar_{g,n}$ ``with
rational tails,'' with one component of genus $g$.  According to
well-known conjectures (\cites{MR1722541,MR1957060}), the tautological
rings of $\Mbar_{g,n}$, $\Mct_{g,n}$ and $\Mrt_{g,n}$ are Gorenstein
and thus equal to their Gorenstein quotients.

\section{Stable graphs}

Graber and Pandharipande were the first to write down an explicit
multiplication formula for boundary classes in $R^\ast(\Mbar_{g,n})$
(\cite{MR1960923}*{\S A.1}); we begin by adapting their notation.  By
a graph, we mean a connected and undirected graph with allowed
half-edges, multiple edges, and self-edges.  In other words, it is a
sextuple
\begin{equation*}
  (V,H,E,N,g\colon V\to\Zz_{\geq 0}, i\colon H\to H)
\end{equation*}
which satisfies the properties:
\begin{enumerate}
  \item $V$ is a finite set of vertices, with a genus function
  $g\colon V\to\Zz_{\geq 0}$.

  \item $H$ is a finite set of half-edges, and $i$ is a involution
    with labeled fixed points $N$.

  \item $E$ is the set of nontrivial orbits of $i$ and $(V,E)$ defines
    a connected graph.

\end{enumerate}

\noindent To avoid ambiguity, we will sometimes use subscripts
$(V_G,H_G,\ldots,i_G)$ when referring a specific graph $G$.  A graph
is called stable if all vertices satisfy the stability condition
$2g(v)-2+n(v)>0$, where $n\colon V\to\Zz_{\geq 0}$ is the a function
which assigns a vertex $v$ to the total number of half edges in $H$
incident to it.  The total genus of a graph $G$ is
\begin{align*}
  g(G) :&= \sum_{v\in V}g(v) + h^1(G)\\
       &= \sum_{v\in V} g(v) + |E|-|V|+1.
\end{align*}

Any pointed stable curve $C$ has an associated stable graph, called
the {\em dual graph} of $C$, that encodes its topological data. The
dual graph is constructed with the following rules: irreducible
components of $C$ correspond to vertices $V$, nodes of $C$ correspond
to edges $E$, and labeled marked points correspond to labeled
half-edges $N$.  Conversely, given any stable graph $G$, we define
$\sigma_G$ to be the closure of the locus of curves in
$\Mbar_{g(G),|N|}$ whose dual graph is $G$.  Another way to define
$\sigma_G$ is using the composition of gluing morphisms:
\begin{equation}
  \iota_G\colon \prod_{v\in V} \Mbar_{g(v),n(v)}\to\Mbar_{g,n},
\end{equation}
where $(g,n) = (g(G),|N|)$, and points on curves are identified in the
manner prescribed by the graph $G$ (\cite{MR1960923}*{Proposition 8}).
The boundary stratum $\sigma_G$ is then equal to
\begin{equation}
  \sigma_G:= \im(\iota_G)
\end{equation}
The loci of curves with a fixed dual graph form the stratification of
$\Mbar_{g,n}$ by topological type.

A {\em specialization} of a graph $G$ is a graph $H$ which is obtained
by replacing each vertex of $G$ with a graph of genus $g(v)$ with
$n(v)$ half edges that are identified with the half edges incident to
$v$.  This corresponds roughly to specialization of curves.

\begin{definition}\label{def:graphstructure}
A $G$-structure on $A$ is an identification of a specialization of
$G$ with $A$. In other words, it is a triple
\begin{equation*}
  \left(
  \alpha\colon V_A\twoheadrightarrow V_G,
  \beta\colon H_G\hookrightarrow H_A,
  \gamma\colon H_A\setminus \im(\beta) \to V_G
  \right)
\end{equation*}
which satisfies:
\begin{enumerate}
  \item The map $\beta$ commutes with involution
    ($\beta\circ\iota_G=\iota_A\circ\beta$) and induces an isomorphism
    between the fixed points $N_G\stackrel{\sim}{\to}N_A$,
  \item Any half-edge $h\in\im(\beta)$ is incident to $v$ if and only
    if $\beta^{-1}(h)$ is incident to $\alpha(v)$
  \item If $h\in H_A\setminus\im(B)$ is incident to $v$, then
    $\gamma(h)=\alpha(v)$.
  \item If $v\in V_G$, then the preimage
    $(\alpha^{-1}(v),\gamma^{-1}(v)/\iota_A)$ is a connected graph of
    genus $g(v)$.
\end{enumerate}
\end{definition}

\begin{example}\label{ex:manystructures}
There can be many $G$-structures on the same graph $A$. Let $G$ and
$A$ be the graphs denoted in the pictures below
\begin{align*}
  G &= 
  \begin{pspicture}(-.7,-.1)(1,0.2)
    \rput(0,.14){
      \cnode*(0,0){\nr}{v1}
      \markedpoint{(-.5196,0.3)}{mp1}{1}
      \markedpoint{(-.5196,-.3)}{mp2}{2}
      \ncline[arrows=-]{mp1}{v1}
      \ncline[arrows=-]{mp2}{v1}
      \pnode(0.8,0){x1}
      \nccurve[angleA=45,angleB=90,arrows=-]{v1}{x1}
      \nccurve[angleA=-45,angleB=-90]{v1}{x1}
    }
  \end{pspicture}
  &A &=
  \begin{pspicture}(-.7,0)(1.7,0)
    \rput(0,.14){
      \cnode*(0,0){\nr}{v1}
      \cnode*(1,0){\nr}{v2}
      \markedpoint{(-.6,0)}{mp1}{1}
      \markedpoint{(1.6,0)}{mp2}{2}
      \ncline[arrows=-]{mp1}{v1}
      \ncline[arrows=-]{mp2}{v2}
      \nccurve[angleA=35,angleB=155]{v1}{v2}
      \nccurve[angleA=-35,angleB=-155]{v1}{v2}
    }
  \end{pspicture}
\end{align*}
There are four $G$-structures on $A$ that respect the labels of the
half-edges.  The map $\beta$ identifies one of the two edges of $A$
with the edge of $G$ in one of two different ways.
\end{example}

A $(G,H)$-graph is a graph $A$ which has both a $G$-structure and an
$H$-structure, called a $(G,H)$-structure. Two $(G,H)$-structures on a
graph $A$ are considered isomorphic if they differ by an automorphism
of $A$.  If $A$ has a $(G,H)$-structure and $e=(h_1,h_2)$ is an edge
of $A$, then we say $e$ is a {\em common $(G,H)$-edge} if it is
identified with both an edge of $G$ and with an edge of $H$, i.e.,~if
\begin{equation}
   h_1,h_2\in\beta(H_G)\cap\beta(H_H)\subseteq H_A.
\end{equation}
A $(G,H)$-graph $A$ is called {\em generic} if every edge of $A$ is
identified with an edge of $G$, an edge of $H$, or both, i.e.~if
\begin{equation}
  \beta(H_G)\cup\beta(H_H) = H_A.
\end{equation}
The set of all generic $(G,H)$-structures is denoted $\Gamma(G,H)$.

\begin{example}
Let $A$ and $G$ be the same graphs as in
Example~\ref{ex:manystructures}, and set $H=G$. There are sixteen
$(G,H)$-structures on $A$. Eight of them are generic. They are
isomorphic in pairs; i.e., up to an automorphism of $A$ there are only
eight different $(G,H)$-structures, four of which are generic.
\end{example}

\section{Decorated graphs}

A {\em decorated stable graph} $G$ is a stable graph $\overline{G}$
with the additional data of a monomial
\begin{equation}
 \theta_v = \prod_{i=1}^{n(v)}\psi_i^{e_i}\prod_{j=1}^m\kappa_j^{f_j}
\end{equation}
chosen for each vertex $v$. Define $\sigma_G$ to be the tautological
class
\begin{equation}
  \sigma_G:= \frac{1}{|\aut(G)|}\iota_{\Gbar\ast}\left(\prod_{v\in
  V}\theta_v\right).
\end{equation}
Here $\Gbar$ denotes the underlying stable graph of $G$ without any
vertex decorations.

\begin{remark}
  The cotangent $\psi$-classes are indexed by the half-edges incident
  to a vertex, so we denote them by adding arrowheads to the
  appropriate half-edge (Figure~\ref{fig:decorated}).
\end{remark}

\psset{unit=0.9cm}
\psset{arrowsize=4pt, arrowlength=.5}

\begin{figure}
\hspace{\fill}
\begin{pspicture}(0,0)(2,1.6)
  \pscurve[showpoints=false](0,0)(.6,.6)(.6,1)(0,1.5)(0,1)(.6,.6)(1.4,.8)(1.8,1.6)
  \pscurve[showpoints=false](1.2,1.6)(1.3,1.3)(1.4,.8)(1.6,0.4)(2,0)
  \psdots[dotscale=1 1](1.6,0.4)
  \psdots[dotscale=1 1](1.3,1.3)
  \uput[l](1.3,1.3){\scriptsize $1$}
  \uput[l](1.7,0.4){\scriptsize $2,\psi$}
  \uput[u](-.1,-.1){\normalsize $2$}
  \uput[ur](2,-.2){\normalsize $3,\kappa_1$}
\end{pspicture}
\hspace{\fill}
\begin{pspicture}(-.6,-.8)(1.1,1.6)
  \genusvertex{(0,0)}{v1}{2}
  \genusvertex{(1,0)}{v2}{3}
  \markedpoint{(1.0,0.8)}{mp1}{1}
  \markedpoint{(1.8,0)}{mp2}{2}

  \ncline[arrows=-]{mp1}{v2}
  \ncline[arrows=->]{mp2}{v2}
  \ncline[arrows=-]{v1}{v2}
  \pnode(-0.7,0){x1}
  \nccurve[angleA=135,angleB=90]{v1}{x1}
  \nccurve[angleA=-135,angleB=-90]{v1}{x1}
  \uput[ur](1,0.1){\small $\kappa_1$}
\end{pspicture}
\hspace{\fill}
\caption{A $2$-pointed curve of genus 6 and its dual decorated graph}\label{fig:decorated}
\end{figure}
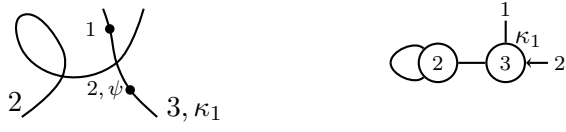

\psset{unit=0.6cm}
\psset{arrowsize=4pt, arrowlength=.5}

Let $\Ss^\ast(\Mbar_{g,n})$ denote the graded vector space of
decorated genus $g$ stable graphs with $n$ labeled half-edges, with
the grading by codimension
\begin{align}
  \codim(G) := |E_G|+\sum_{v\in V}\codim(\theta_v), \\
\intertext{where}
 \codim\left(\prod_{i=1}^n\psi_i^{e_i}\prod_{j=1}^m\kappa_j^{f_j}\right)=\sum_{i=1}^n e_i+\sum_{j=1}^m jf_j.
\end{align}
For any decorated graph $A$, and vertex $v\in V_A$, let
$\kappa_{a,v}\cdot A$ denote the graph where the decoration $\theta_v$
is replaced with $\kappa_a\theta_v$, with associative and distributive
rules
\begin{align}
  (\kappa_{a,v}\kappa_{a',v})\cdot A &=
  \kappa_{a,v}\cdot(\kappa_{a',v}\cdot A), \\
  (\kappa_{a,v}+\kappa_{a',v})\cdot A &=
  \kappa_{a,v}\cdot A + \kappa_{a',v}\cdot A.
\end{align}
Similarly, if $h$ is any half-edge of $A$, then we let $\psi_{h}\cdot
A$ denote the graph $A$ where the half-edge $h$ is decorated with an
additional arrowhead.

\begin{example}
The pullback of the class $\kappa_a$ to a boundary stratum $B$ is the
sum of strata
\begin{equation}
  \kappa_a\sigma_A = \sum_{v\in V(A)} \sigma_{\kappa_{a,v}\cdot A}.
\end{equation}
\end{example}

Let $G$ be a decorated graph, and let $A$ be an undecorated graph with
a $\overline{G}$-structure $(\alpha,\beta,\gamma)$.  Let $v$ be a
vertex in $G$ decorated with the monomial $\theta_v :=
\prod_{i=1}^n\psi_i^{e_i}\prod_{j=1}^m\kappa_j^{f_j}$, and set
\begin{align}
  f_{A}(G,v) &=
  \prod_{i=1}^n \psi_{\beta(i)}^{e_i}
  \prod_{j=1}^m\left(
    \sum_{w\in\alpha^{-1}(v)}\kappa_{j,w}
  \right)^{f_j}, \\
  F_A(G,H)&=\sum_{v \in V_G} f_{A}(G,v)
  \sum_{v \in V_H} f_{A}(H,v)
  \prod_{e = (h_1,h_2)}
  \left(-\psi_{h_1}-\psi_{h_2}\right).
\end{align}
The last product is taken over all common
$(\overline{G},\overline{H})$-edges of $A$.

We define multiplication in $\Sigma^\ast(\Mbar_{g,n})$ with the
formula
\begin{equation}\label{multrule}
  G\cdot H := \sum_{A\in\Gamma(\overline{G},\overline{H})}
  \frac{1}{\left|\aut(A)\right|}
  F_{A}(G,H)\cdot A,
\end{equation}
thus turning $\Sigma^\ast$ into a graded algebra.


\begin{proposition}\label{prop:Phi}
There is a natural surjective map of rings
\begin{equation}
  \Phi\colon\Ss^\ast(\Mbar_{g,n})\to R^\ast(\Mbar_{g,n})
\end{equation}
which sends a decorated dual graph to its associated stratum class.
\end{proposition}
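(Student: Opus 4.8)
The plan is to establish the three asserted properties of $\Phi$ in turn: that it is a well-defined graded linear map into $R^\ast(\Mbar_{g,n})$, that it is multiplicative (and unital), and that it is surjective. The first is essentially formal. Since $\Ss^\ast(\Mbar_{g,n})$ is the free $\Qq$-vector space on isomorphism classes of decorated stable graphs, I only need that $\sigma_G$ depends solely on the isomorphism class of $G$; this holds because $\iota_{\Gbar}$ and the monomials $\theta_v$ are canonical up to exactly the relabelings realized by automorphisms of $G$, and the weight $1/|\aut(G)|$ makes the whole expression canonical (so $\Phi$ is ``natural'' in the sense of being forced by its prescription on the basis). Because $\iota_{\Gbar}$ is a gluing map of pure codimension $|E_G|$ and $\prod_v\theta_v$ has codimension $\sum_v\codim(\theta_v)$, one gets $\sigma_G\in R^{\codim(G)}$; and each $\sigma_G$ is tautological, being a pushforward under a gluing map of a monomial in $\psi$- and $\kappa$-classes, so $\Phi$ indeed lands in $R^\ast$.

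The substance of the proposition is multiplicativity, $\Phi(G\cdot H)=\sigma_G\cdot\sigma_H$, which says exactly that the combinatorial rule \eqref{multrule} computes the intersection product of two decorated boundary classes. I would deduce this from the projection formula applied to $\iota_{\overline{H}\ast}$ together with the excess intersection formula and three standard inputs. First, the fibered product $\prod_{V_{\Gbar}}\Mbar_{g(v),n(v)}\times_{\Mbar_{g,n}}\prod_{V_{\overline{H}}}\Mbar_{g(v),n(v)}$ has connected components indexed precisely by the generic $(\Gbar,\overline{H})$-graphs $A\in\Gamma(\Gbar,\overline{H})$ (a $(G,H)$-structure being a common specialization), each mapping to $\Mbar_{g,n}$ through $\iota_{\overline{A}}$. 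Second, on the component indexed by $A$ the refined intersection is the Euler class of the excess bundle, which is the sum over the common $(\Gbar,\overline{H})$-edges $e=(h_1,h_2)$ of $A$ of the normal line to the corresponding node; since that line is the tensor of the two branch tangent lines, its first Chern class is $-\psi_{h_1}-\psi_{h_2}$, producing the factor $\prod_e(-\psi_{h_1}-\psi_{h_2})$ of $F_A(G,H)$. Third, under a gluing map $\iota^\ast\psi_i=\psi_i$ on a half-edge and $\iota^\ast\kappa_j$ is the sum of the $\kappa_j$'s of the vertices lying above the given vertex (Arbarello--Cornalba normalization of $\kappa$), so transporting the decorations of $G$ and of $H$ through the relevant pullbacks produces exactly the decoration factors $f_A(G,v)$ and $f_A(H,v)$. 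Collecting these reconstructs $F_A(G,H)\cdot A$, and balancing the weights $1/|\aut(G)|$, $1/|\aut(H)|$, $1/|\aut(A)|$ against the sum over $\Gamma(\Gbar,\overline{H})$ yields \eqref{multrule}. This is precisely the computation performed in \cite{MR1960923}*{\S A.1}, which I would cite; in a self-contained treatment the main obstacle is the second step — correctly identifying the excess bundle over each component and pinning down its Euler class — together with the attendant automorphism bookkeeping. Since $\Phi$ sends the undecorated one-vertex graph to the fundamental class $1$, it is then a ring homomorphism.

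For surjectivity, I would argue that the image of $\Phi$, as $(g,n)$ varies, is a system of $\Qq$-subalgebras of the Chow rings $A^\ast(\Mbar_{g,n})$ that contains the fundamental classes and is closed under the two families of tautological operations: the pushforward along a gluing morphism of a decorated stratum of a factor is again a decorated stratum, and the pushforward along a forgetful morphism of a decorated stratum is a $\Qq$-combination of decorated strata (stabilizing any resulting unstable genus-$0$ vertex and using the standard comparisons for $\psi$-classes under forgetting a point). Since $R^\ast(\Mbar_{g,n})$ is by definition the smallest such system, $R^\ast\subseteq\im(\Phi)$, and the reverse inclusion was noted above, so $\Phi$ is onto. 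Equivalently, using the multiplication rule just established together with Mumford's formula expressing the Chern character of the Hodge bundle through $\kappa$-, $\psi$-, and boundary classes, every product of the standard generators of $R^\ast$ reduces by induction on codimension to a $\Qq$-combination of decorated strata, which is the same conclusion.
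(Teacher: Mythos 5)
Your proposal is correct and amounts to a fleshed-out version of exactly the argument the paper delegates to Graber--Pandharipande: the paper's proof is a two-line citation of \cite{MR1960923} (Formula 11 for the map and ring structure, Proposition 11 for surjectivity), and your excess-intersection derivation of the multiplication rule~\eqref{multrule} together with the minimality argument for surjectivity is precisely the content of those references. No gaps; the only points you rightly flag as needing care (the automorphism bookkeeping in the fibered-product decomposition and the identification of the excess bundle's Euler class as $\prod_e(-\psi_{h_1}-\psi_{h_2})$) are handled in the cited source.
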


\begin{proof}
  This is a map from \cite{MR1960923}*{Formula 11}, and surjectivity
  follows from \cite{MR1960923}*{Proposition 11}.
\end{proof}

\begin{example}
We calculate the product of $\sigma_G$ and $\sigma_H$, where $G$ and
$H$ are the following decorated stable graphs in
$\Sigma^3(\Mbar_{4,0})$ and $\Sigma^6(\Mbar_{4,0})$.  Note that
vertices of genus 0 are denoted by dots.

\begin{align*}
G &=
\begin{pspicture}(-1,0)(1,1)
  \rput(0,0.14){
    \genusvertex{(0,0)}{v1}{3}
    \pnode(-.8,0){x1}
    \nccurve[angleA=135,angleB=90]{v1}{x1}
    \nccurve[angleA=-135,angleB=-90]{v1}{x1}
    \rput(.4,0.4){$\kappa_2$}
  }
\end{pspicture}
&H &=
\begin{pspicture}(-1,0)(2,1)
  \rput(0,0.14){
    \cnode*(0,.5){\nr}{v1}
    \cnode*(0,-.5){\nr}{v2}
    \cnode*(.5,0){\nr}{v3}
    \genusvertex{(1.5,0)}{v4}{2}
    \pnode(-.57,1.07){x1}
    \pnode(-.57,-1.07){x2}
    \ncline{v1}{v3}
    \ncline{v2}{v3}
    \ncline[arrows=->]{v3}{v4}
    \nccurve[angleA=90,angleB=45]{v1}{x1}
    \nccurve[angleA=180,angleB=-135]{v1}{x1}
    \nccurve[angleA=180,angleB=135]{v2}{x2}
    \nccurve[angleA=-90,angleB=-45]{v2}{x2}
  }
\end{pspicture}
\end{align*}
\smallskip

There are two
generic $(\overline{G},\overline{H})$-graphs:
\begin{align*}
A &=
\begin{pspicture}(-1,0)(2,1)
  \rput(0,0,14){
    \cnode*(0,.5){\nr}{v1}
    \cnode*(0,-.5){\nr}{v2}
    \cnode*(.5,0){\nr}{v3}
    \genusvertex{(1.5,0)}{v4}{2}
    \pnode(-.57,1.07){x1}
    \pnode(-.57,-1.07){x2}
    \ncline{v1}{v3}
    \ncline{v2}{v3}
    \ncline{v3}{v4}
    \nccurve[angleA=90,angleB=45]{v1}{x1}
    \nccurve[angleA=180,angleB=-135]{v1}{x1}
    \nccurve[angleA=180,angleB=135]{v2}{x2}
    \nccurve[angleA=-90,angleB=-45]{v2}{x2}
  }
\end{pspicture}
&B &=
\begin{pspicture}(-1,0)(2,1)
  \rput(0,0,14){
    \cnode*(0,.5){\nr}{v1}
    \cnode*(0,-.5){\nr}{v2}
    \cnode*(.5,0){\nr}{v3}
    \genusvertex{(1.5,0)}{v4}{1}
    \pnode(-.57,1.07){x1}
    \pnode(-.57,-1.07){x2}
    \pnode(2.2,0){x3}
    \ncline{v1}{v3}
    \ncline{v2}{v3}
    \ncline{v3}{v4}
    \nccurve[angleA=90,angleB=45]{v1}{x1}
    \nccurve[angleA=180,angleB=-135]{v1}{x1}
    \nccurve[angleA=180,angleB=135]{v2}{x2}
    \nccurve[angleA=-90,angleB=-45]{v2}{x2}
    \nccurve[angleA=45,angleB=90]{v4}{x3}
    \nccurve[angleA=-45,angleB=-90]{v4}{x3}
  }
\end{pspicture}
\end{align*}

\bigskip

\noindent Graph $A$ has four $\overline{G}$-structures and eight
$\overline{H}$-structures, a total of 32
$(\overline{G},\overline{H})$-structures, each of which has exactly
one common edge.  The order of the automorphism group of $A$ is eight.
Graph $B$ has sixteen generic $(\overline{G},\overline{H})$-structure
with no common edges.  The order of the automorphism group of $B$ is
sixteen.

According to formula~\eqref{multrule}, the product $F_A(G,H)\cdot A$
is the sum of eight graphs
\begin{multline}
  F_A(G,H)\cdot A := 
  -\begin{pspicture}(-.6,0)(2,2)
    \rput(0,0.14){
      \cnode*(0,.5){\nr}{v1}
      \cnode*(0,-.5){\nr}{v2}
      \cnode*(.5,0){\nr}{v3}
      \genusvertex{(1.5,0)}{v4}{2}
      \rput(-.4,.3){\small$\kappa_2$}
      \pnode(-.57,1.07){x1}
      \pnode(-.57,-1.07){x2}
      \ncline{v1}{v3}
      \ncline{v2}{v3}
      \ncline[arrows=->]{v3}{v4}
      \nccurve[arrows=<-,angleA=90,angleB=45]{v1}{x1}
      \nccurve[angleA=180,angleB=-135]{v1}{x1}
      \nccurve[angleA=180,angleB=135]{v2}{x2}
      \nccurve[angleA=-90,angleB=-45]{v2}{x2}
    }
  \end{pspicture}
  -\begin{pspicture}(-.6,0)(2,2)
    \rput(0,0.14){
      \cnode*(0,.5){\nr}{v1}
      \cnode*(0,-.5){\nr}{v2}
      \cnode*(.5,0){\nr}{v3}
      \genusvertex{(1.5,0)}{v4}{2}
      \rput(-.4,-.3){\small$\kappa_2$}
      \pnode(-.57,1.07){x1}
      \pnode(-.57,-1.07){x2}
      \ncline{v1}{v3}
      \ncline{v2}{v3}
      \ncline[arrows=->]{v3}{v4}
      \nccurve[arrows=<-,angleA=90,angleB=45]{v1}{x1}
      \nccurve[angleA=180,angleB=-135]{v1}{x1}
      \nccurve[angleA=180,angleB=135]{v2}{x2}
      \nccurve[angleA=-90,angleB=-45]{v2}{x2}
    }
  \end{pspicture}
  -
  \begin{pspicture}(-.6,0)(2,2)
    \rput(0,0.14){
      \cnode*(0,.5){\nr}{v1}
      \cnode*(0,-.5){\nr}{v2}
      \cnode*(.5,0){\nr}{v3}
      \genusvertex{(1.5,0)}{v4}{2}
      \rput(0,0){\small$\kappa_2$}
      \pnode(-.57,1.07){x1}
      \pnode(-.57,-1.07){x2}
      \ncline{v1}{v3}
      \ncline{v2}{v3}
      \ncline[arrows=->]{v3}{v4}
      \nccurve[arrows=<-,angleA=90,angleB=45]{v1}{x1}
      \nccurve[angleA=180,angleB=-135]{v1}{x1}
      \nccurve[angleA=180,angleB=135]{v2}{x2}
      \nccurve[angleA=-90,angleB=-45]{v2}{x2}
    }
  \end{pspicture}
  -
  \begin{pspicture}(-.6,0)(2,2)
    \rput(0,0.14){
      \cnode*(0,.5){\nr}{v1}
      \cnode*(0,-.5){\nr}{v2}
      \cnode*(.5,0){\nr}{v3}
      \genusvertex{(1.5,0)}{v4}{2}
      \rput(1.3,0.4){\small$\kappa_2$}
      \pnode(-.57,1.07){x1}
      \pnode(-.57,-1.07){x2}
      \ncline{v1}{v3}
      \ncline{v2}{v3}
      \ncline[arrows=->]{v3}{v4}
      \nccurve[arrows=<-,angleA=90,angleB=45]{v1}{x1}
      \nccurve[angleA=180,angleB=-135]{v1}{x1}
      \nccurve[angleA=180,angleB=135]{v2}{x2}
      \nccurve[angleA=-90,angleB=-45]{v2}{x2}
    }
  \end{pspicture}
  \\
  -
  \begin{pspicture}(-.6,0)(2,2)
    \rput(0,0.14){
      \cnode*(0,.5){\nr}{v1}
      \cnode*(0,-.5){\nr}{v2}
      \cnode*(.5,0){\nr}{v3}
      \genusvertex{(1.5,0)}{v4}{2}
      \rput(-.4,.3){\small$\kappa_2$}
      \pnode(-.57,1.07){x1}
      \pnode(-.57,-1.07){x2}
      \ncline{v1}{v3}
      \ncline{v2}{v3}
      \ncline[arrows=->]{v3}{v4}
      \nccurve[angleA=90,angleB=45]{v1}{x1}
      \nccurve[arrows=<-,angleA=180,angleB=-135]{v1}{x1}
      \nccurve[angleA=180,angleB=135]{v2}{x2}
      \nccurve[angleA=-90,angleB=-45]{v2}{x2}
    }
  \end{pspicture}
  -
  \begin{pspicture}(-.6,0)(2,2)
    \rput(0,0.14){
      \cnode*(0,.5){\nr}{v1}
      \cnode*(0,-.5){\nr}{v2}
      \cnode*(.5,0){\nr}{v3}
      \genusvertex{(1.5,0)}{v4}{2}
      \rput(-.4,-.3){\small$\kappa_2$}
      \pnode(-.57,1.07){x1}
      \pnode(-.57,-1.07){x2}
      \ncline{v1}{v3}
      \ncline{v2}{v3}
      \ncline[arrows=->]{v3}{v4}
      \nccurve[angleA=90,angleB=45]{v1}{x1}
      \nccurve[arrows=<-,angleA=180,angleB=-135]{v1}{x1}
      \nccurve[angleA=180,angleB=135]{v2}{x2}
      \nccurve[angleA=-90,angleB=-45]{v2}{x2}
    }
  \end{pspicture}
  -
  \begin{pspicture}(-.6,0)(2,2)
    \rput(0,0.14){
      \cnode*(0,.5){\nr}{v1}
      \cnode*(0,-.5){\nr}{v2}
      \cnode*(.5,0){\nr}{v3}
      \genusvertex{(1.5,0)}{v4}{2}
      \rput(0,0){\small$\kappa_2$}
      \pnode(-.57,1.07){x1}
      \pnode(-.57,-1.07){x2}
      \ncline{v1}{v3}
      \ncline{v2}{v3}
      \ncline[arrows=->]{v3}{v4}
      \nccurve[angleA=90,angleB=45]{v1}{x1}
      \nccurve[arrows=<-,angleA=180,angleB=-135]{v1}{x1}
      \nccurve[angleA=180,angleB=135]{v2}{x2}
      \nccurve[angleA=-90,angleB=-45]{v2}{x2}
    }
  \end{pspicture}
  -
  \begin{pspicture}(-.6,0)(2,2)
    \rput(0,0.14){
      \cnode*(0,.5){\nr}{v1}
      \cnode*(0,-.5){\nr}{v2}
      \cnode*(.5,0){\nr}{v3}
      \genusvertex{(1.5,0)}{v4}{2}
      \rput(1.3,0.4){\small$\kappa_2$}
      \pnode(-.57,1.07){x1}
      \pnode(-.57,-1.07){x2}
      \ncline{v1}{v3}
      \ncline{v2}{v3}
      \ncline[arrows=->]{v3}{v4}
      \nccurve[angleA=90,angleB=45]{v1}{x1}
      \nccurve[arrows=<-,angleA=180,angleB=-135]{v1}{x1}
      \nccurve[angleA=180,angleB=135]{v2}{x2}
      \nccurve[angleA=-90,angleB=-45]{v2}{x2}
    }
  \end{pspicture}
\end{multline}
\medskip

\noindent all of which lie in the kernel of $\Phi$.

The product $F_B(G,H)\cdot B$ is the sum of four graphs
\begin{multline}
  F_B(G,H)\cdot B :=
  \begin{pspicture}(-.6,0)(2.3,1)
    \rput(0,0.14){
      \cnode*(0,.5){\nr}{v1}
      \cnode*(0,-.5){\nr}{v2}
      \cnode*(.5,0){\nr}{v3}
      \genusvertex{(1.5,0)}{v4}{1}
      \rput(-.4,.3){\small$\kappa_2$}
      \pnode(-.57,1.07){x1}
      \pnode(-.57,-1.07){x2}
      \pnode(2.2,0){x3}
      \ncline{v1}{v3}
      \ncline{v2}{v3}
      \ncline[arrows=->]{v3}{v4}
      \nccurve[angleA=90,angleB=45]{v1}{x1}
      \nccurve[angleA=180,angleB=-135]{v1}{x1}
      \nccurve[angleA=180,angleB=135]{v2}{x2}
      \nccurve[angleA=-90,angleB=-45]{v2}{x2}
      \nccurve[angleA=45,angleB=90]{v4}{x3}
      \nccurve[angleA=-45,angleB=-90]{v4}{x3}
    }
  \end{pspicture}
  +
  \begin{pspicture}(-.6,0)(2.3,1)
    \rput(0,0.14){
      \cnode*(0,.5){\nr}{v1}
      \cnode*(0,-.5){\nr}{v2}
      \cnode*(.5,0){\nr}{v3}
      \genusvertex{(1.5,0)}{v4}{1}
      \rput(-.4,-.3){\small$\kappa_2$}
      \pnode(-.57,1.07){x1}
      \pnode(-.57,-1.07){x2}
      \pnode(2.2,0){x3}
      \ncline{v1}{v3}
      \ncline{v2}{v3}
      \ncline[arrows=->]{v3}{v4}
      \nccurve[angleA=90,angleB=45]{v1}{x1}
      \nccurve[angleA=180,angleB=-135]{v1}{x1}
      \nccurve[angleA=180,angleB=135]{v2}{x2}
      \nccurve[angleA=-90,angleB=-45]{v2}{x2}
      \nccurve[angleA=45,angleB=90]{v4}{x3}
      \nccurve[angleA=-45,angleB=-90]{v4}{x3}
    }
  \end{pspicture}
  +
  \begin{pspicture}(-.6,0)(2.3,1)
    \rput(0,0.14){
      \cnode*(0,.5){\nr}{v1}
      \cnode*(0,-.5){\nr}{v2}
      \cnode*(.5,0){\nr}{v3}
      \genusvertex{(1.5,0)}{v4}{1}
      \rput(0,0){\small$\kappa_2$}
      \pnode(-.57,1.07){x1}
      \pnode(-.57,-1.07){x2}
      \pnode(2.2,0){x3}
      \ncline{v1}{v3}
      \ncline{v2}{v3}
      \ncline[arrows=->]{v3}{v4}
      \nccurve[angleA=90,angleB=45]{v1}{x1}
      \nccurve[angleA=180,angleB=-135]{v1}{x1}
      \nccurve[angleA=180,angleB=135]{v2}{x2}
      \nccurve[angleA=-90,angleB=-45]{v2}{x2}
      \nccurve[angleA=45,angleB=90]{v4}{x3}
      \nccurve[angleA=-45,angleB=-90]{v4}{x3}
    }
  \end{pspicture}
  +
  \begin{pspicture}(-.6,0)(2.3,1)
    \rput(0,0.14){
      \cnode*(0,.5){\nr}{v1}
      \cnode*(0,-.5){\nr}{v2}
      \cnode*(.5,0){\nr}{v3}
      \genusvertex{(1.5,0)}{v4}{1}
      \rput(1.3,0.4){\small$\kappa_2$}
      \pnode(-.57,1.07){x1}
      \pnode(-.57,-1.07){x2}
      \pnode(2.2,0){x3}
      \ncline{v1}{v3}
      \ncline{v2}{v3}
      \ncline[arrows=->]{v3}{v4}
      \nccurve[angleA=90,angleB=45]{v1}{x1}
      \nccurve[angleA=180,angleB=-135]{v1}{x1}
      \nccurve[angleA=180,angleB=135]{v2}{x2}
      \nccurve[angleA=-90,angleB=-45]{v2}{x2}
      \nccurve[angleA=45,angleB=90]{v4}{x3}
      \nccurve[angleA=-45,angleB=-90]{v4}{x3}
    }
  \end{pspicture}
\end{multline}
\end{example}

\medskip
\noindent Only the last term above does not lie in the kernel of
$\Phi$, and so
\begin{equation}
  \sigma_G\cdot\sigma_H=\Phi(G\cdot H)
  = \Phi\left(
  \begin{pspicture}(-.6,0)(2.3,1.5)
    \rput(0,0.14){
      \cnode*(0,.5){\nr}{v1}
      \cnode*(0,-.5){\nr}{v2}
      \cnode*(.5,0){\nr}{v3}
      \genusvertex{(1.5,0)}{v4}{1}
      \rput(1.3,0.4){\small$\kappa_2$}
      \pnode(-.57,1.07){x1}
      \pnode(-.57,-1.07){x2}
      \pnode(2.2,0){x3}
      \ncline{v1}{v3}
      \ncline{v2}{v3}
      \ncline[arrows=->]{v3}{v4}
      \nccurve[angleA=90,angleB=45]{v1}{x1}
      \nccurve[angleA=180,angleB=-135]{v1}{x1}
      \nccurve[angleA=180,angleB=135]{v2}{x2}
      \nccurve[angleA=-90,angleB=-45]{v2}{x2}
      \nccurve[angleA=45,angleB=90]{v4}{x3}
      \nccurve[angleA=-45,angleB=-90]{v4}{x3}
    }
  \end{pspicture}
  \right)
  = \int_{\Mbar_{1,3}}\psi_1\kappa_2 =
  \frac{1}{8}.
\end{equation}

\section{Curves of compact type, genus 4 and 5}

Recall from \S 1 that $\Mct_{g,n}$ denotes the locus of curves in
$\Mbar_{g,n}$ of compact type and $\Mrt_{g,n}$ denotes the locus of
curves in $\Mbar_{g,n}$ with rational tails.  Define the tautological
rings $R^\ast(\Mct_{g,n})$ and $R^\ast(\Mrt_{g,n})$ by restriction.
Conjecturally, the rings $R^\ast(\Mbar_{g,n})$, respectively
$R^\ast(\Mct_{g,n})$ and $R^\ast(\Mrt_{g,n})$ are Gorenstein with top
degree $3g-3+n$, respectively $2g-3+n$ and $g-2+n$
(\cites{MR1486986,MR2003030,MR717614}). The one-dimensionality of the
top degrees are known in all cases (\cites{MR1346214,MR1722541,MR2176546}), with the isomorphism
given by evaluation
\begin{equation}
   \alpha \mapsto \int_{\Mbar_{g,n}} \alpha,
\end{equation}
in the case of $\Mbar_{g,n}$, and using the evaluation classes
$\lambda_g$ and $\lambda_g\lambda_{g-1}$ in the cases of $\Mct_{g,n}$
and $\Mrt_{g,n}$,
\begin{align}
   R^{2g-3+n}(\Mct_{g,n})&\cong\Qq
   &R^{g-2+n}(\Mrt_{g,n})&\cong\Qq\\
   \alpha &\mapsto \int_{\Mbar_{g,n}} \alpha\lambda_g
   &\alpha &\mapsto \int_{\Mbar_{g,n}} \alpha\lambda_g\lambda_{g-1}.
\end{align}
In this section we show that $R^\ast(\Mct_4)$ is in fact Gorenstein,
and discuss the structure of $R^\ast(\Mct_5)$.

By~\cite{MR2120989}*{Proposition 2}, any tautological decoration
$\theta_v$ of sufficiently high codimension lies on the tautological
ring of the boundary $R^\ast(\d\Mbar_{g,n})$, defined to be the
subring of $A^\ast(\d\Mbar_{g,n})$ generated by pushforwards of
tautological classes on boundary divisors via gluing morphisms. As a
result, the map $\Phi$ defined in Proposition~\ref{prop:Phi} remains
surjective when restricted to the graphs whose vertex decorations
satisfy
\begin{equation}\label{eq:lowdegree}
  \codim(\theta_v) < g(v)+\delta_{0g(v)}-\delta_{0n(v)},
\end{equation}
and the tautological restriction sequence 
\begin{equation}\label{eq:restrict}
  R^{k-1}(\d\Mbar_{g,n})\longrightarrow R^k(\Mbar_{g,n})\longrightarrow R^k(\M_{g,n})\longrightarrow 0
\end{equation}
is exact in degrees $k\geq g(v)+\delta_{0g(v)}-\delta_{0n(v)}$.  Note
that since $A^k(\d\Mbar_{g,n})=R^k(\d\Mbar_{g,n})$ when $k=0,1$, the
restriction sequence is also exact when $k=1,2$.  The exactness of
this sequence for intermediate values remains
unknown~(\cite{MR2120989}*{Conjecture 2}).

There are 30 graphs of genus $4$ whose decorations satisfy
inequality~\eqref{eq:lowdegree}.  Five relations among the thirty
strata represented by these graphs can be seen as as follows: the
relation $\kappa_1=0$ on $\M_2$ pulls back to $\kappa_1-\psi_1=0$ on
$\M_{2,1}$. By exactness of the restriction
sequence~\eqref{eq:restrict} when $k=1$, this extends to a relation on
$\M_{2,1}$,
\begin{equation}\label{eq:m21}
  \kappa_1- \psi_1 = \frac{7}{5}\delta_G
\end{equation}
where $G$ is the graph
\begin{equation}
  G = 
  \begin{pspicture}(-.3,0.3)
  \rput(0,0.14){
    \genusvertex{(0,0)}{v1}{1}
    \genusvertex{(1,0)}{v2}{1}
    \markedpoint{(-.5,0.5)}{mp1}{1}
    \ncline{mp1}{v1}
    \ncline{v1}{v2}
  }
  \end{pspicture}
\end{equation}
This relation, when pushed forward via various gluing morphisms to
relations on $\Mct_{4}$, allows one to express the strata
corresponding to the following five graphs in terms of the remaining
twenty-five.
\begin{equation}
  \begin{pspicture}(1,0.4)(2,-0.4)
  \rput(0,0.0){
    \genusvertex{(0,0)}{v1}{2}
    \genusvertex{(1,0)}{v2}{2}
    \rput(-.3,0.4){\small$\kappa_1$}
    \ncline{v1}{v2}
  }
  \end{pspicture}
  \begin{pspicture}(-1,0.4)(2,-0.4)
  \rput(0,0.0){
    \genusvertex{(0,0)}{v1}{2}
    \genusvertex{(1,0)}{v2}{2}
    \rput(-.3,0.4){\small$\kappa_1$}
    \ncline[arrows=->]{v1}{v2}
  }
  \end{pspicture}
  \begin{pspicture}(-1,0.4)(2,-0.4)
  \rput(0,0.0){
    \genusvertex{(0,0)}{v1}{2}
    \genusvertex{(1,0)}{v2}{2}
    \rput(-.3,0.4){\small$\kappa_1$}
    \rput(1.3,0.4){\small$\kappa_1$}
    \ncline{v1}{v2}
  }
  \end{pspicture}
  \begin{pspicture}(-1,0.4)(3,-0.4)
  \rput(0,0.0){
    \genusvertex{(0,0)}{v1}{2}
    \genusvertex{(1,0)}{v2}{1}
    \genusvertex{(2,0)}{v3}{1}
    \rput(-.3,0.4){\small$\kappa_1$}
    \ncline{v1}{v2}
    \ncline{v2}{v3}
  }
  \end{pspicture}
  \begin{pspicture}(-1,0.4)(3,-0.4)
  \rput(0,0.0){
    \cnode*(1,0){\nr}{v1}
    \genusvertex{(0,0)}{v2}{2}
    \genusvertex{(2,0.5)}{v3}{1}
    \genusvertex{(2,-.5)}{v4}{1}
    \ncline{v1}{v2}
    \rput(-.3,0.4){\small$\kappa_1$}
    \ncline{v1}{v3}
    \ncline{v1}{v4}
  }
  \end{pspicture}
\end{equation}
Similarly, relation~\eqref{eq:m21} can be pulled back to $\Mct_{2,2}$
and then pushed forward via gluing morphisms, allowing us to eliminate
one more graph:
\begin{equation}
  \begin{pspicture}(-1,0.2)(3,-0.2)
  \rput(0,0.0){
    \genusvertex{(0,0)}{v1}{1}
    \genusvertex{(1,0)}{v2}{2}
    \genusvertex{(2,0)}{v3}{1}
    \rput(1,0.4){\small$\kappa_1$}
    \ncline{v1}{v2}
    \ncline{v2}{v3}
  }
  \end{pspicture}
\end{equation}

In a similar manner we get three additional relations from
$R^2(\M_{3,1})$, which is one-dimensional. (This is the socle
statement of Faber's conjecture, which is proved in
\cite{MR2120989}*{Proposition 3} and \cite{MR2176546}*{\S 5.8}.)
Since~\eqref{eq:restrict} is exact when $k=2$, the relations
$\psi_1\kappa_1=5\psi_1^2$, $\kappa_2=\psi_1^2$, and
$\kappa_1^2=9\psi_1^2$ extend to relations on $\Mct_{3,1}$
\begin{align}
  \psi_1\kappa_1&=5\psi_1^2
  +\frac{16}{21}\sigma_{G_1}
  +\frac{5}{7}\sigma_{G_2}
  +\frac{40}{21}\sigma_{G_3}
  -\frac{61}{21}\sigma_{G_4}
  +\frac{4}{35}\sigma_{G_5}
  -\frac{16}{35}\sigma_{G_6}\\
  \kappa_2&=\psi_1^2
  +\frac{41}{21}\sigma_{G_1}
  +\frac{41}{21}\sigma_{G_3}
  -\frac{41}{21}\sigma_{G_4}
  -\frac{4}{35}\sigma_{G_5}
  -\frac{8}{35}\sigma_{G_6}\\
  \kappa_1^2&=9\psi_1^2
  +\frac{299}{21}\sigma_{G_1}
  +\frac{10}{7}\sigma_{G_2}
  +\frac{347}{21}\sigma_{G_3}
  -\frac{389}{21}\sigma_{G_4}
  +\frac{19}{35}\sigma_{G_5}
  -\frac{2}{7}\sigma_{G_6}
\end{align}
where
\begin{align}
  G_1 &=
  \begin{pspicture}(-.3,-0.1)(1,0.9)
  \rput(0,0.14){
    \genusvertex{(0,0)}{v1}{2}
    \genusvertex{(1,0)}{v2}{1}
    \markedpoint{(-.6,0.6)}{mp1}{1}
    \ncline[arrows=<-]{v1}{v2}
    \ncline{v1}{mp1}
  }
  \end{pspicture}
  &G_2 &=
  \begin{pspicture}(-.3,-0.1)(1,0.9)
  \rput(0,0.14){
    \genusvertex{(0,0)}{v1}{2}
    \genusvertex{(1,0)}{v2}{1}
    \markedpoint{(-.6,0.6)}{mp1}{1}
    \ncline{v1}{v2}
    \ncline[arrows=<-]{v1}{mp1}
  }
  \end{pspicture}
  &G_3 &=
  \begin{pspicture}(-.3,-0.1)(1,0.9)
  \rput(0,0.14){
    \genusvertex{(0,0)}{v1}{2}
    \genusvertex{(1,0)}{v2}{1}
    \markedpoint{(1.6,0.6)}{mp1}{1}
    \ncline[arrows=<-]{v1}{v2}
    \ncline{v2}{mp1}
  }
  \end{pspicture}
  \\
  G_4 &=
  \begin{pspicture}(-.3,-0.1)(1,0.9)
  \rput(0,0.14){
    \genusvertex{(0,0)}{v1}{2}
    \cnode*(1,0){\nr}{v2}
    \genusvertex{(2,0)}{v3}{1}
    \markedpoint{(1,0.8)}{mp1}{1}
    \ncline{v1}{v2}
    \ncline{v2}{v3}
    \ncline{v2}{mp1}
  }
  \end{pspicture}
  &G_5 &=
  \begin{pspicture}(-.3,-0.1)(1,0.9)
  \rput(0,0.14){
    \genusvertex{(0,0)}{v1}{1}
    \genusvertex{(1,0)}{v2}{1}
    \genusvertex{(2,0)}{v3}{1}
    \markedpoint{(1,0.8)}{mp1}{1}
    \ncline{v1}{v2}
    \ncline{v2}{v3}
    \ncline{v2}{mp1}
  }
  \end{pspicture}
  &G_6 &=
  \begin{pspicture}(-.3,-0.1)(1,0.9)
  \rput(0,0.14){
    \genusvertex{(0,0)}{v1}{1}
    \genusvertex{(1,0)}{v2}{1}
    \genusvertex{(2,0)}{v3}{1}
    \markedpoint{(-.6,0.6)}{mp1}{1}
    \ncline{v1}{v2}
    \ncline{v2}{v3}
    \ncline{v1}{mp1}
  }
  \end{pspicture}
\end{align}
The coefficients above were found using a Maple program which
implemented the algorithm described in the previous section.
Specifically, this program calculated the 7 by 10 matrix of
intersection numbers below

\renewcommand{\arraystretch}{1.5}
\begin{center}
\begin{tabular}{c|ccccccc|ccc}
&$\psi_1^2$&$\sigma_{G_1}$&$\sigma_{G_2}$&$\sigma_{G_3}$&$\sigma_{G_4}$&$\sigma_{G_5}$&$\sigma_{G_6}$&$\psi_1\kappa_1$&$\kappa_2$&$\kappa_1^2$\\\hline
$\psi_1^2$&$\frac{31}{70}$&$\frac{21}{10}$&$\frac{7}{10}$&0&0&1&0&$\frac{31}{7}$&$\frac{31}{7}$&$\frac{248}{7}$\\
$\sigma_{G_1}$&$\frac{21}{10}$&$\frac{13}{10}$&$\frac{-1}{10}$&0&1&-1&0&$\frac{21}{5}$&$\frac{14}{5}$&$\frac{91}{5}$\\
$\sigma_{G_2}$&$\frac{7}{10}$&$\frac{-1}{10}$&$\frac{-11}{10}$&1&0&-3&0&$\frac{42}{5}$&$\frac{14}{5}$&$\frac{91}{5}$\\
$\sigma_{G_3}$&0&0&1&$\frac{-7}{10}$&$\frac{-7}{10}$&2&-1&$\frac{14}{5}$&0&$\frac{21}{5}$\\
$\sigma_{G_4}$&0&1&0&$\frac{-7}{10}$&0&1&-1&$\frac{21}{10}$&$\frac{7}{10}$&$\frac{7}{2}$\\
$\sigma_{G_5}$&1&-1&-3&2&1&0&0&3&1&5\\
$\sigma_{G_6}$&0&0&0&-1&-1&0&0&2&0&2
\end{tabular}
\end{center}


\noindent where each entry is $13824$ times the intersection number of
the classes indexing the rows and columns.

The relations above span its
3-dimension kernel.  Because of these three relations, we may also
disregard graphs in $\Mct_4$ whose genus $3$ vertices are decorated
with $\kappa_1\psi_1$, $\kappa_2$, and $\kappa_1^2$.
\begin{equation}
  \begin{pspicture}(-1,0)(2,0)
  \rput(0,0){
    \genusvertex{(0,0)}{v1}{3}
    \genusvertex{(1,0)}{v2}{1}
    \ncline{v1}{v2}
    \rput(-.4,0.4){\small$\kappa_1$}
    \ncline[arrows=<-]{v1}{v2}
  }
  \end{pspicture}
  \begin{pspicture}(-1,0)(2,0)
  \rput(0,0){
    \genusvertex{(0,0)}{v1}{3}
    \genusvertex{(1,0)}{v2}{1}
    \ncline{v1}{v2}
    \rput(-.4,0.4){\small$\kappa_2$}
    \ncline{v1}{v2}
  }
  \end{pspicture}
  \begin{pspicture}(-1,0)(2,0)
  \rput(0,0){
    \genusvertex{(0,0)}{v1}{3}
    \genusvertex{(1,0)}{v2}{1}
    \ncline{v1}{v2}
    \rput(-.4,0.4){\small$\kappa_1^2$}
    \ncline{v1}{v2}
  }
  \end{pspicture}
\end{equation}

The remaining 21 decorated dual graphs are listed below.

\bigskip

\begin{tabular}{cl}
degree 0: &
\parbox[c]{9cm}{
\begin{pspicture}(0,-.8)(0,.8)
  \genusvertex{(0,0)}{v1}{4}
\end{pspicture}
}\\
degree 1: &
\parbox[c]{9cm}{
\begin{pspicture}(0,-.8)(5,.8)
  \rput(0,0){
    \genusvertex{(0,0)}{v1}{4}
    \rput(0.3,0.3){\small $\kappa_1$}
  }
  \rput(1.5,0){
    \genusvertex{(0,0)}{v1}{3}
    \genusvertex{(1,0)}{v2}{1}
    \ncline{v1}{v2}
  }
  \rput(4,0){
    \genusvertex{(0,0)}{v1}{2}
    \genusvertex{(1,0)}{v2}{2}
    \ncline{v1}{v2}
  }
\end{pspicture}
}
\\
degree 2: &
\parbox[c]{9cm}{
\begin{pspicture}(0,-.8)(14.5,.8)
  \rput(0,0){
    \genusvertex{(0,0)}{v1}{4}
    \rput(0.3,0.3){\small $\kappa_2$}
  }
  \rput(1.5,0){
    \genusvertex{(0,0)}{v1}{3}
    \genusvertex{(1,0)}{v2}{1}
    \ncline{v1}{v2}
    \rput(0.3,0.3){\small $\kappa_1$}
  }
  \rput(4,0){
    \genusvertex{(0,0)}{v1}{3}
    \genusvertex{(1,0)}{v2}{1}
    \ncline{v1}{v2}
    \ncline[arrows=<-]{v1}{v2}
  }
  \rput(6.5,0){
    \genusvertex{(0,0)}{v1}{2}
    \genusvertex{(1,0)}{v2}{2}
    \ncline[arrows=<-]{v1}{v2}
  }
  \rput(9,0){
    \genusvertex{(0,0)}{v1}{2}
    \genusvertex{(1,0)}{v2}{1}
    \genusvertex{(2,0)}{v3}{1}
    \ncline{v1}{v2}
    \ncline{v2}{v3}
  }
  \rput(12.5,0){
    \genusvertex{(0,0)}{v1}{1}
    \genusvertex{(1,0)}{v2}{2}
    \genusvertex{(2,0)}{v3}{1}
    \ncline{v1}{v2}
    \ncline{v2}{v3}
  }
\end{pspicture}
}
\\
\raisebox{.6cm}{degree 3:} &
\parbox[c]{9cm}{
\begin{pspicture}(0,-2.8)(10.5,.8)
  \rput(0,0){
    \genusvertex{(0,0)}{v1}{3}
    \genusvertex{(1,0)}{v2}{1}
    \ncline{v1}{v2}
    \ncline[arrows=<<-]{v1}{v2}
  }
  \rput(2.5,0){
    \genusvertex{(0,0)}{v1}{2}
    \genusvertex{(1,0)}{v2}{2}
    \ncline[arrows=<->]{v1}{v2}
  }
  \rput(5,0){
    \genusvertex{(0,0)}{v1}{2}
    \genusvertex{(1,0)}{v2}{1}
    \genusvertex{(2,0)}{v3}{1}
    \ncline[arrows=<-]{v1}{v2}
    \ncline{v2}{v3}
  }
  \rput(8.5,0){
    \genusvertex{(0,0)}{v1}{1}
    \genusvertex{(1,0)}{v2}{2}
    \genusvertex{(2,0)}{v3}{1}
    \ncline[arrows=->]{v1}{v2}
    \ncline{v2}{v3}
  }
  \rput(1,-1){
    \cnode*(1,-.5){\nr}{v1}
    \genusvertex{(0,-.5)}{v2}{2}
    \genusvertex{(2,0)}{v3}{1}
    \genusvertex{(2,-1)}{v4}{1}
    \ncline{v1}{v2}
    \ncline{v1}{v3}
    \ncline{v1}{v4}
  }
  \rput(4.5,-1){
    \genusvertex{(1,-.5)}{v1}{1}
    \genusvertex{(0,-.5)}{v2}{1}
    \genusvertex{(2,0)}{v3}{1}
    \genusvertex{(2,-1)}{v4}{1}
    \ncline{v1}{v2}
    \ncline{v1}{v3}
    \ncline{v1}{v4}
  }
  \rput(8,-1){
    \genusvertex{(0,-.5)}{v1}{1}
    \genusvertex{(1,-.5)}{v2}{1}
    \genusvertex{(2,-.5)}{v3}{1}
    \genusvertex{(3,-.5)}{v4}{1}
    \ncline{v1}{v2}
    \ncline{v2}{v3}
    \ncline{v3}{v4}
  }
\end{pspicture}
}
\\
degree 4: &
\parbox[c]{9cm}{
\begin{pspicture}(0,-1.8)(10,.8)
  \rput(0,0){
    \cnode*(1,-.5){\nr}{v1}
    \genusvertex{(0,-.5)}{v2}{2}
    \genusvertex{(2,0)}{v3}{1}
    \genusvertex{(2,-1)}{v4}{1}
    \ncline[arrows=->]{v1}{v2}
    \ncline{v1}{v3}
    \ncline{v1}{v4}
  }
  \rput(3.5,0){
    \cnode*(1,-.5){\nr}{v1}
    \genusvertex{(0,0)}{v2}{1}
    \genusvertex{(0,-1)}{v3}{1}
    \genusvertex{(2,-.5)}{v4}{1}
    \genusvertex{(3,-.5)}{v5}{1}
    \ncline{v1}{v2}
    \ncline{v1}{v3}
    \ncline{v1}{v4}
    \ncline{v4}{v5}
  }
  \rput(8,0){
    \cnode*(1,-.5){\nr}{v1}
    \genusvertex{(0,0)}{v2}{1}
    \genusvertex{(2,0)}{v3}{1}
    \genusvertex{(0,-1)}{v4}{1}
    \genusvertex{(2,-1)}{v5}{1}
    \ncline{v1}{v2}
    \ncline{v1}{v3}
    \ncline{v1}{v4}
    \ncline{v1}{v5}
  }
\end{pspicture}
}\\
degree 5: &
\parbox[c]{9cm}{
\begin{pspicture}(0,-.8)(3,.8)
  \rput(0,0){
    \cnode*(1,0){\nr}{v1}
    \cnode*(2,0){\nr}{v2}
    \genusvertex{(0,.5)}{v3}{1}
    \genusvertex{(3,.5)}{v4}{1}
    \genusvertex{(0,-.5)}{v5}{1}
    \genusvertex{(3,-.5)}{v6}{1}
    \ncline{v1}{v3}
    \ncline{v1}{v5}
    \ncline{v1}{v2}
    \ncline{v2}{v4}
    \ncline{v2}{v6}
  }
\end{pspicture}
}\\
\end{tabular}

\bigskip

\noindent Note that there are six decorated strata classes in degree
2, and seven in degree 3. With no other obvious dependencies among
these classes, the conjectured Gorenstein condition for
$R^\ast(\Mct_g)$ suggests the existence of a new degree 3
relation~(\cite{MR1957060}).

\begin{proposition}\label{pr:m4relation}
The following new relation holds among classes in $H_{12}(\Mbar_4)$.
\begin{multline}\label{eq:m4relation}
  0 = 
  7
  \left[
  \begin{pspicture}(-.3,0)(1.3,.5)
    \rput(0,0.14){ 
      \genusvertex{(0,0)}{v1}{3}
      \genusvertex{(1,0)}{v2}{1}
      \ncline[arrows=<<-]{v1}{v2}
    }
  \end{pspicture}
  \right]
  -20
  \left[
  \begin{pspicture}(-.3,0)(1.3,.5)
    \rput(0,0.14){ 
      \genusvertex{(0,0)}{v1}{2}
      \genusvertex{(1,0)}{v2}{2}
      \ncline[arrows=<->]{v1}{v2}
    }
  \end{pspicture}
  \right]
  -\frac{35}{3}
  \left[
  \begin{pspicture}(-.3,0)(2.3,.5)
    \rput(0,0.14){ 
      \genusvertex{(0,0)}{v1}{1}
      \genusvertex{(1,0)}{v2}{2}
      \genusvertex{(2,0)}{v3}{1}
      \ncline[arrows=->]{v1}{v2}
      \ncline[arrows=-]{v2}{v3}
    }
  \end{pspicture}
  \right]
  +\frac{106}{3} 
  \left[
  \begin{pspicture}(-.3,0)(2.3,.5)
    \rput(0,0.14){ 
      \genusvertex{(0,0)}{v1}{1}
      \genusvertex{(1,0)}{v2}{1}
      \genusvertex{(2,0)}{v3}{2}
      \ncline[arrows=-]{v1}{v2}
      \ncline[arrows=->]{v2}{v3}
    }
  \end{pspicture}
  \right]
  \\
  -\frac{22}{3}
  \left[
  \begin{pspicture}(-.3,0)(2.3,1.2)
    \rput(0,0.14){ 
      \genusvertex{(0,.5)}{v1}{1}
      \genusvertex{(0,-.5)}{v2}{1}
      \cnode*(1,0){\nr}{v3}
      \genusvertex{(2,0)}{v4}{2}
      \ncline[arrows=-]{v1}{v3}
      \ncline[arrows=-]{v2}{v3}
      \ncline[arrows=-]{v3}{v4}
    }
  \end{pspicture}
  \right]
  +\frac{34}{5}
  \left[
  \begin{pspicture}(-.3,0)(2.3,1.2)
    \rput(0,0.14){ 
      \genusvertex{(0,.5)}{v1}{1}
      \genusvertex{(0,-.5)}{v2}{1}
      \genusvertex{(1,0)}{v3}{1}
      \genusvertex{(2,0)}{v4}{1}
      \ncline[arrows=-]{v1}{v3}
      \ncline[arrows=-]{v2}{v3}
      \ncline[arrows=-]{v3}{v4}
    }
  \end{pspicture}
  \right]
  -8
  \left[
  \begin{pspicture}(-.3,0)(3.3,.8)
    \rput(0,0.14){ 
      \genusvertex{(0,0)}{v1}{1}
      \genusvertex{(1,0)}{v2}{1}
      \genusvertex{(2,0)}{v3}{1}
      \genusvertex{(3,0)}{v4}{1}
      \ncline[arrows=-]{v1}{v2}
      \ncline[arrows=-]{v2}{v3}
      \ncline[arrows=-]{v3}{v4}
    }
  \end{pspicture}
  \right]
  +\frac{7}{12}  
  \left[
  \begin{pspicture}(-.3,0)(0.8,.8)
    \rput(0,0.14){
      \genusvertex{(0,0)}{v1}{3}
      \pnode(0.7,0){x1}
      \nccurve[angleA=45,angleB=90,arrows=<<-]{v1}{x1}
      \nccurve[angleA=-45,angleB=-90]{v1}{x1}
    }
  \end{pspicture}
  \right]
  \\
  -\frac{1}{36}  
  \left[
  \begin{pspicture}(-.8,0)(0.8,.8)
    \rput(0,0.14){
      \genusvertex{(0,0)}{v1}{2}
      \pnode(-.7,0){x1}
      \pnode(0.7,0){x2}
      \nccurve[angleA=135,angleB=90,arrows=<-]{v1}{x1}
      \nccurve[angleA=-135,angleB=-90]{v1}{x1}
      \nccurve[angleA=45,angleB=90]{v1}{x2}
      \nccurve[angleA=-45,angleB=-90]{v1}{x2}
    }
  \end{pspicture}
  \right]
  +\frac{7}{24}  
  \left[
  \begin{pspicture}(-.3,0)(1.8,.8)
    \rput(0,0.14){
      \genusvertex{(0,0)}{v1}{3}
      \rput(0.3,0.4){\small$\kappa_1$}
      \cnode*(1,0){\nr}{v2}
      \pnode(1.7,0){x1}
      \ncline{v1}{v2}
      \nccurve[angleA=45,angleB=90]{v2}{x1}
      \nccurve[angleA=-45,angleB=-90]{v2}{x1}
    }
  \end{pspicture}
  \right]
  -\frac{7}{4}  
  \left[
  \begin{pspicture}(-.3,0)(1.8,.8)
    \rput(0,0.14){
      \genusvertex{(0,0)}{v1}{3}
      \cnode*(1,0){\nr}{v2}
      \pnode(1.7,0){x1}
      \ncline[arrows=<-]{v1}{v2}
      \nccurve[angleA=45,angleB=90]{v2}{x1}
      \nccurve[angleA=-45,angleB=-90]{v2}{x1}
    }
  \end{pspicture}
  \right]
  -\frac{5}{6}  
  \left[
  \begin{pspicture}(-.8,0)(1.8,.8)
    \rput(0,0.14){
      \genusvertex{(0,0)}{v1}{2}
      \genusvertex{(1,0)}{v2}{1}
      \pnode(-.7,0){x1}
      \ncline{v1}{v2}
      \nccurve[angleA=135,angleB=90,arrows=<-]{v1}{x1}
      \nccurve[angleA=-135,angleB=-90]{v1}{x1}
    }
  \end{pspicture}
  \right]
  -\frac{19}{72}  
  \left[
  \begin{pspicture}(-.8,0)(1.3,.8)
    \rput(0,0.14){
      \genusvertex{(0,0)}{v1}{2}
      \genusvertex{(1,0)}{v2}{1}
      \pnode(-.7,0){x1}
      \ncline[arrows=<-]{v1}{v2}
      \nccurve[angleA=135,angleB=90]{v1}{x1}
      \nccurve[angleA=-135,angleB=-90]{v1}{x1}
    }
  \end{pspicture}
  \right]
  \\
  -\frac{65}{18}  
  \left[
  \begin{pspicture}(-.3,0)(1.3,.8)
    \rput(0,0.14){
      \genusvertex{(0,0)}{v1}{2}
      \genusvertex{(1,0)}{v2}{1}
      \nccurve[angleA=45,angleB=135,arrows=<-]{v1}{v2}
      \nccurve[angleA=-45,angleB=-135]{v1}{v2}
    }
  \end{pspicture}
  \right]
  +\frac{73}{36}  
  \left[
  \begin{pspicture}(-.3,0)(1.8,.8)
    \rput(0,0.14){
      \genusvertex{(0,0)}{v1}{2}
      \genusvertex{(1,0)}{v2}{1}
      \pnode(1.7,0){x}
      \ncline[arrows=<-]{v1}{v2}
      \nccurve[angleA=45,angleB=90]{v2}{x}
      \nccurve[angleA=-45,angleB=-90]{v2}{x}
    }
  \end{pspicture}
  \right]
  +\frac{19}{34560}  
  \left[
  \begin{pspicture}(-.8,0)(.7,.9)
    \rput(0,0.14){
      \genusvertex{(0,0)}{v1}{1}
      \pnode(-.7,0){x1}
      \pnode(.5,.5){x2}
      \pnode(.5,-.5){x3}
      \nccurve[angleA=135,angleB=90]{v1}{x1}
      \nccurve[angleA=-135,angleB=-90]{v1}{x1}
      \nccurve[angleA=90,angleB=135]{v1}{x2}
      \nccurve[angleA=0,angleB=-45]{v1}{x2}
      \nccurve[angleA=0,angleB=45]{v1}{x3}
      \nccurve[angleA=-90,angleB=-135]{v1}{x3}
    }
  \end{pspicture}
  \right]
  -\frac{17}{1728}
  \left[
  \begin{pspicture}(-.8,0)(1.8,.8)
    \rput(0,0.14){
      \genusvertex{(0,0)}{v1}{2}
      \cnode*(1,0){\nr}{v2}
      \pnode(-.7,0){x1}
      \pnode(1.7,0){x2}
      \ncline{v1}{v2}
      \nccurve[angleA=135,angleB=90]{v1}{x1}
      \nccurve[angleA=-135,angleB=-90]{v1}{x1}
      \nccurve[angleA=45,angleB=90]{v2}{x2}
      \nccurve[angleA=-45,angleB=-90]{v2}{x2}
    }
  \end{pspicture}
  \right]
  \\
  +\frac{1}{36}
  \left[
  \begin{pspicture}(-.3,0)(1.1,.8)
    \rput(0,0.14){
      \genusvertex{(0,0)}{v1}{2}
      \cnode*(1,0){\nr}{v2}
      \ncline{v1}{v2}
      \nccurve[angleA=45,angleB=135]{v1}{v2}
      \nccurve[angleA=-45,angleB=-135]{v1}{v2}
    }
  \end{pspicture}
  \right]
  -\frac{71}{864}
  \left[
  \begin{pspicture}(-.3,0)(1.8,.8)
    \rput(0,0.14){
      \genusvertex{(0,0)}{v1}{2}
      \cnode*(1,0){\nr}{v2}
      \pnode(1.7,0){x}
      \nccurve[angleA=45,angleB=135]{v1}{v2}
      \nccurve[angleA=-45,angleB=-135]{v1}{v2}
      \nccurve[angleA=45,angleB=90]{v2}{x}
      \nccurve[angleA=-45,angleB=-90]{v2}{x}
    }
  \end{pspicture}
  \right]
  -\frac{37}{864}
  \left[
  \begin{pspicture}(-.3,0)(1.3,1)
    \rput(0,0.14){
      \genusvertex{(0,0)}{v1}{2}
      \cnode*(1,0){\nr}{v2}
      \pnode(1,0.7){x1}
      \pnode(1,-.7){x2}
      \ncline{v1}{v2}
      \nccurve[angleA=45,angleB=0]{v2}{x1}
      \nccurve[angleA=135,angleB=180]{v2}{x1}
      \nccurve[angleA=-135,angleB=180]{v2}{x2}
      \nccurve[angleA=-45,angleB=0]{v2}{x2}
    }
  \end{pspicture}
  \right]
  +\frac{11}{288}
  \left[
  \begin{pspicture}(-.3,0)(1.3,1)
    \rput(0,0.14){
      \genusvertex{(0,0)}{v1}{1}
      \genusvertex{(1,0)}{v2}{1}
      \pnode(1,0.7){x1}
      \pnode(1,-.7){x2}
      \ncline{v1}{v2}
      \nccurve[angleA=45,angleB=0]{v2}{x1}
      \nccurve[angleA=135,angleB=180]{v2}{x1}
      \nccurve[angleA=-135,angleB=180]{v2}{x2}
      \nccurve[angleA=-45,angleB=0]{v2}{x2}
    }
  \end{pspicture}
  \right]
  \\
  +\frac{11}{72}
  \left[
  \begin{pspicture}(-.8,0)(1.3,.8)
    \rput(0,0.14){
      \genusvertex{(0,0)}{v1}{1}
      \genusvertex{(1,0)}{v2}{1}
      \pnode(-.7,0){x1}
      \nccurve[angleA=135,angleB=90]{v1}{x1}
      \nccurve[angleA=-135,angleB=-90]{v1}{x1}
      \nccurve[angleA=45,angleB=135]{v1}{v2}
      \nccurve[angleA=-45,angleB=-135]{v1}{v2}
    }
  \end{pspicture}
  \right]
  -\frac{7}{360}
  \left[
  \begin{pspicture}(-.8,0)(1.8,.8)
    \rput(0,0.14){
      \genusvertex{(0,0)}{v1}{1}
      \genusvertex{(1,0)}{v2}{1}
      \pnode(-.7,0){x1}
      \pnode(1.7,0){x2}
      \ncline{v1}{v2}
      \nccurve[angleA=135,angleB=90]{v1}{x1}
      \nccurve[angleA=-135,angleB=-90]{v1}{x1}
      \nccurve[angleA=45,angleB=90]{v2}{x2}
      \nccurve[angleA=-45,angleB=-90]{v2}{x2}
    }
  \end{pspicture}
  \right]
  -\frac{2}{15}
  \left[
  \begin{pspicture}(-.3,0)(1.3,.8)
    \rput(0,0.14){
      \genusvertex{(0,0)}{v1}{1}
      \genusvertex{(1,0)}{v2}{1}
      \ncline{v1}{v2}
      \nccurve[angleA=45,angleB=135]{v1}{v2}
      \nccurve[angleA=-45,angleB=-135]{v1}{v2}
    }
  \end{pspicture}
  \right]
  +\frac{5}{18}
  \left[
  \begin{pspicture}(-.3,0)(2.8,.8)
    \rput(0,0.14){
      \genusvertex{(0,0)}{v1}{1}
      \genusvertex{(1,0)}{v2}{2}
      \cnode*(2,0){\nr}{v3}
      \pnode(2.7,0){x}
      \ncline{v1}{v2}
      \ncline{v2}{v3}
      \nccurve[angleA=45,angleB=90]{v3}{x}
      \nccurve[angleA=-45,angleB=-90]{v3}{x}
    }
  \end{pspicture}
  \right]
  \\
  +\frac{1}{8}
  \left[
  \begin{pspicture}(-.3,0)(2.8,.8)
    \rput(0,0.14){
      \genusvertex{(0,0)}{v1}{2}
      \genusvertex{(1,0)}{v2}{1}
      \cnode*(2,0){\nr}{v3}
      \pnode(2.7,0){x}
      \ncline{v1}{v2}
      \ncline{v2}{v3}
      \nccurve[angleA=45,angleB=90]{v3}{x}
      \nccurve[angleA=-45,angleB=-90]{v3}{x}
    }
  \end{pspicture}
  \right]
  +\frac{7}{36}
  \left[
  \begin{pspicture}(-.3,0)(2.3,.8)
    \rput(0,0.14){
      \genusvertex{(0,0)}{v1}{2}
      \cnode*(1,0){\nr}{v2}
      \genusvertex{(2,0)}{v3}{1}
      \nccurve[angleA=45,angleB=135]{v1}{v2}
      \nccurve[angleA=-45,angleB=-135]{v1}{v2}
      \ncline{v2}{v3}
    }
  \end{pspicture}
  \right]
  -\frac{1}{18}
  \left[
  \begin{pspicture}(-.3,0)(2.3,.8)
    \rput(0,0.14){
      \genusvertex{(0,0)}{v1}{2}
      \cnode*(1,0){\nr}{v2}
      \genusvertex{(2,0)}{v3}{1}
      \ncline{v1}{v2}
      \nccurve[angleA=45,angleB=135]{v2}{v3}
      \nccurve[angleA=-45,angleB=-135]{v2}{v3}
    }
  \end{pspicture}
  \right]
  -\frac{4}{3}
  \left[
  \begin{pspicture}(-.3,0)(1.8,1.2)
    \rput(0,0.14){  
      \genusvertex{(0,.5)}{v1}{2}
      \genusvertex{(0,-.5)}{v2}{1}
      \cnode*(1,0){\nr}{v3}
      \pnode(1.7,0){x}
      \ncline{v1}{v3}
      \ncline{v2}{v3}
      \nccurve[angleA=45,angleB=90]{v3}{x}
      \nccurve[angleA=-45,angleB=-90]{v3}{x}
    }
  \end{pspicture}
  \right]
  \\
  +\frac{53}{60}
  \left[
  \begin{pspicture}(-.3,0)(1.8,1.2)
    \rput(0,0.14){ 
      \genusvertex{(0,.5)}{v1}{1}
      \genusvertex{(0,-.5)}{v2}{1}
      \genusvertex{(1,0)}{v3}{1}
      \pnode(1.7,0){x}
      \ncline{v1}{v3}
      \ncline{v2}{v3}
      \nccurve[angleA=45,angleB=90]{v3}{x}
      \nccurve[angleA=-45,angleB=-90]{v3}{x}
    }
  \end{pspicture}
  \right]
  -\frac{4}{5}
  \left[
  \begin{pspicture}(-.3,0)(2.8,.8)
    \rput(0,0.14){
      \genusvertex{(0,0)}{v1}{1}
      \genusvertex{(1,0)}{v2}{1}
      \genusvertex{(2,0)}{v3}{1}
      \pnode(2.7,0){x}
      \ncline{v1}{v2}
      \ncline{v2}{v3}
      \nccurve[angleA=45,angleB=90]{v3}{x}
      \nccurve[angleA=-45,angleB=-90]{v3}{x}
    }
  \end{pspicture}
  \right]
  +\frac{83}{30}
  \left[
  \begin{pspicture}(-.3,0)(2.3,.8)
    \rput(0,0.14){
      \genusvertex{(0,0)}{v1}{1}
      \genusvertex{(1,0)}{v2}{1}
      \genusvertex{(2,0)}{v3}{1}
      \pnode(2.7,0){x}
      \ncline{v1}{v2}
      \nccurve[angleA=45,angleB=135]{v2}{v3}
      \nccurve[angleA=-45,angleB=-135]{v2}{v3}
    }
  \end{pspicture}
  \right]
  +\frac{373}{120}
  \left[
  \begin{pspicture}(-.3,0)(2.3,.8)
    \rput(0,0.14){
      \genusvertex{(0,0)}{v1}{1}
      \genusvertex{(1,-.5)}{v2}{1}
      \genusvertex{(2,0)}{v3}{1}
      \ncline{v1}{v2}
      \ncline{v2}{v3}
      \nccurve[angleA=45,angleB=135]{v1}{v3}
    }
  \end{pspicture}
  \right],
\end{multline}
where $\left[G\right]$ denotes the tautological class $\sigma_G$.
\end{proposition}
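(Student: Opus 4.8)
The plan is to pin down the coefficients by the intersection pairing on $\Mbar_4$, and to use the determination of $H^\ast(\Mbar_4)$ by Bergstr\"om and Tommasi to upgrade the output of that linear algebra from a relation in a Gorenstein quotient to an honest cohomological relation. \textbf{Step 1: a finite spanning set.} By \cite{MR2120989}*{Proposition 2} together with the codimension bound \eqref{eq:lowdegree}, the classes $\sigma_G$ attached to codimension-$3$ decorated stable graphs whose vertex decorations satisfy \eqref{eq:lowdegree} span $R^3(\Mbar_4)$. The relations already in hand --- equation \eqref{eq:m21} coming from $\kappa_1=0$ on $\M_2$, its pullback to $\Mct_{2,2}$, and the three socle relations of $R^2(\M_{3,1})$ --- pushed forward along the gluing morphisms $\iota_G$, cut this list down to an explicit finite family $\gamma_1,\dots,\gamma_N$ that contains every stratum appearing in \eqref{eq:m4relation}.

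\textbf{Step 2: the pairing matrix.} Since $\Mbar_4$ is a smooth proper Deligne--Mumford stack of dimension $9$, Poincar\'e duality gives a perfect pairing $H^6(\Mbar_4)\times H^{12}(\Mbar_4)\to\Qq$, and $H^6(\Mbar_4)=H_{12}(\Mbar_4)$. According to the Bergstr\"om--Tommasi computation the cohomology of $\Mbar_4$ is tautological in every degree, so $H^{12}(\Mbar_4)$ is spanned by decorated boundary strata; fix such a spanning family $\delta_1,\dots,\delta_M$ of codimension $6$, again reduced using \eqref{eq:lowdegree}. Using the multiplication rule \eqref{multrule} in $\Ss^\ast(\Mbar_4)$ and the ring map $\Phi$ of Proposition~\ref{prop:Phi}, each product $\gamma_i\cdot\delta_j$ is expanded as a $\Qq$-combination of top-codimension decorated graphs, that is, of $\psi$-$\kappa$ monomials integrated over products of smaller moduli spaces through gluing maps; iterating reduces every contribution to a Hodge integral on a product of spaces $\Mbar_{g',n'}$, which is evaluated by the existing $\psi$-$\kappa$-$\lambda$ software. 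This produces the $N\times M$ matrix $P=\bigl(\int_{\Mbar_4}\gamma_i\,\delta_j\bigr)$, which is computed on a machine.

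\textbf{Step 3: extracting the relation.} A combination $\sum_i c_i\gamma_i$ vanishes in $H^6(\Mbar_4)$ if and only if it pairs to zero against every $\delta_j$: this is because the $\delta_j$ span $H^{12}(\Mbar_4)$ and the Poincar\'e pairing is perfect. Hence the space of cohomological relations among the $\gamma_i$ is exactly the left kernel of $P$. One then checks by direct computation that the coefficient vector of \eqref{eq:m4relation} lies in this left kernel, which proves the relation; moreover the Bergstr\"om--Tommasi value of $\dim H^{12}(\Mbar_4)$ forces the left kernel to be one-dimensional, so \eqref{eq:m4relation} is, up to scaling, the only new relation in this degree. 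As consistency checks one can restrict \eqref{eq:m4relation} along the gluing maps $\Mbar_{3,1}\to\Mbar_4$, $\Mbar_{2,2}\to\Mbar_4$, $\Mbar_{2,1}\times\Mbar_{2,1}\to\Mbar_4$, and so on, recovering relations already known on those spaces, and pull it back to the interior $\M_4$, where every term restricts to $0$.

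The main obstacle is Step 2. Unlike a pure $\psi$-$\kappa$-$\lambda$ integral, each product $\gamma_i\cdot\delta_j$ must be expanded through the boundary-class multiplication \eqref{multrule}, with its sum over all generic $(\Gbar,\overline{H})$-structures on every generic $(\Gbar,\overline{H})$-graph, the contributions $-\psi_{h_1}-\psi_{h_2}$ attached to common edges, and the automorphism factors $1/|\aut(A)|$; the number of products, times the combinatorial proliferation of intermediate decorated graphs, is large, and the genuine work is in verifying that every automorphism order and every common-edge term is accounted for correctly. A secondary point is certifying that the chosen codimension-$6$ strata really do span $H^{12}(\Mbar_4)$, which is precisely what the Bergstr\"om--Tommasi description gives once combined with the reduction \eqref{eq:lowdegree}.
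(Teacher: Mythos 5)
Your proposal is correct in substance, and its computational core --- expanding each product $\gamma_i\cdot\delta_j$ through the multiplication rule \eqref{multrule}, assembling the matrix of intersection numbers, and reading off the relation as a left-kernel vector --- is exactly how the coefficients of \eqref{eq:m4relation} are pinned down in practice. Where you diverge from the paper is in the argument that a left-kernel vector is an honest relation in $H_{12}(\Mbar_4)$ rather than merely a relation in the Gorenstein quotient, and that there is exactly one. You argue via Poincar\'e duality: the pairing $H^6(\Mbar_4)\times H^{12}(\Mbar_4)\to\Qq$ is perfect and the codimension-$6$ tautological classes $\delta_j$ span $H^{12}(\Mbar_4)$, so pairing to zero against every $\delta_j$ certifies vanishing, and $\dim H^{12}(\Mbar_4)=32$ forces the kernel to be one-dimensional. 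The paper instead dualizes the compactly supported restriction sequence to obtain $0\to H_{13}^{BM}(\M_4)\to H_{12}^{BM}(\d\M_4)\to H_{12}^{BM}(\Mbar_4)\to 0$, and combines Tommasi's computation $\dim H^5(\M_4)=1$ with the fact that $H_{12}(\d\M_4)$ is spanned by the $33$ displayed strata to conclude \emph{a priori} that there is exactly one relation, namely the kernel of $\phi$. The two routes lean on slightly different inputs: yours requires that the codimension-$6$ tautological classes span $H^{12}(\Mbar_4)$, which you attribute wholesale to Bergstr\"om--Tommasi --- if one only takes the Betti number from their work, the spanning has to be extracted from the computed rank of the pairing being equal to that Betti number, a small step you should make explicit; the paper's route needs only the spanning of $H_{12}(\d\M_4)$ by boundary strata together with the one-dimensionality of $H^5(\M_4)$, and in exchange it guarantees the existence of a nontrivial relation before any intersection number is computed, whereas your argument only certifies a relation after a kernel vector has been exhibited. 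Both are valid; the paper's version is the more economical use of the Bergstr\"om--Tommasi and Tommasi computations.
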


This relation was shown to hold in ring $R^3(\Mbar_{4})$ by Faber and
Pandharipande, by a different method.  Their work is not yet
published.

\begin{proof}
The cohomology of $\M_4$ and $\Mbar_4$, calculated respectively by
Tommasi and Bergstr\"om-Tommasi in \cite{MR2134272} and
\cite{MR2295510}, is known to be isomorphic to cohomology with compact
support. Dual to the short exact sequence
\begin{equation}
  0
  \rightarrow H^{12}_c(\Mbar_{4})
  \rightarrow H^{12}_c(\d\M_4)
  \rightarrow H^{13}_c(\M_4)
  \rightarrow 0
\end{equation}
is a sequence of Borel-Moore homology groups
\begin{equation}
  0
  \rightarrow H_{13}^{BM}(\M_{4})
  \rightarrow H_{12}^{BM}(\d\M_4)
  \rightarrow H_{12}^{BM}(\Mbar_4)
  \rightarrow 0.
\end{equation}
The first term $H_{13}^{BM}(\M_4)$ is isomorphic to $H^5(\M_4)$ and is
known to be 1-dimensional (\cite{MR2134272}).  The second term is
isomorphic to $H_{12}(\d\M_4)$ and is spanned by the 33 terms in
equation~\eqref{eq:m4relation}.  The kernel of the map $\phi$ in the
equivalent sequence
\begin{equation}
  0
  \rightarrow H^5(\M_4)
  \rightarrow H_{12}(\d\M_4)
  \stackrel{\phi}{\rightarrow} H_{12}(\Mbar_4)
  \rightarrow 0
\end{equation}
is exactly the relation given above.

\end{proof}

A similar analysis can be done for $R^3(\Mct_5)$. There are 31 graphs
in $\Sigma^3(\Mct_5)$, eleven of which can be eliminated using
relations from genus 2, 3, and 4. The rank of the intersection pairing
$R^3 \times R^4$ on the remaining 20 graphs is only 19, which suggests
a new codimension 3 relation in $\Mct_5$.

\begin{conjecture}
The following relation holds in $R^3(\Mct_5)$.
\begin{multline}
  0 = 
  \left[
  \begin{pspicture}(-.3,0)(1.3,.8)
    \rput(0,0.14){ 
      \genusvertex{(0,0)}{v1}{4}
      \genusvertex{(1,0)}{v2}{1}
      \rput(0.3,0.4){\small$\kappa_2$}
      \ncline{v1}{v2}
    }
  \end{pspicture}
  \right]
  -7
  \left[
  \begin{pspicture}(-.3,0)(1.3,.8)
    \rput(0,0.14){ 
      \genusvertex{(0,0)}{v1}{4}
      \genusvertex{(1,0)}{v2}{1}
      \ncline[arrows=<<-]{v1}{v2}
    }
  \end{pspicture}
  \right]
  -30
  \left[
  \begin{pspicture}(-.3,0)(1.3,.8)
    \rput(0,0.14){ 
      \genusvertex{(0,0)}{v1}{3}
      \genusvertex{(1,0)}{v2}{2}
      \rput(0.3,0.4){\small$\kappa_1$}
      \ncline[arrows=->]{v1}{v2}
    }
  \end{pspicture}
  \right]
  +102
  \left[
  \begin{pspicture}(-.3,0)(1.3,.8)
    \rput(0,0.14){ 
      \genusvertex{(0,0)}{v1}{3}
      \genusvertex{(1,0)}{v2}{2}
      \ncline[arrows=<->]{v1}{v2}
    }
  \end{pspicture}
  \right]
  +48
  \left[
  \begin{pspicture}(-.3,0)(1.3,.8)
    \rput(0,0.14){ 
      \genusvertex{(0,0)}{v1}{3}
      \genusvertex{(1,0)}{v2}{2}
      \ncline[arrows=<<-]{v1}{v2}
    }
  \end{pspicture}
  \right]
  \\
  -4
  \left[
  \begin{pspicture}(-.3,0)(2.3,.8)
    \rput(0,0.14){ 
      \genusvertex{(0,0)}{v1}{1}
      \genusvertex{(1,0)}{v2}{3}
      \genusvertex{(2,0)}{v3}{1}
      \rput(1.3,0.4){\small$\kappa_1$}
      \ncline{v1}{v2}
      \ncline{v2}{v3}
    }
  \end{pspicture}
  \right]
  +19
  \left[
  \begin{pspicture}(-.3,0)(2.3,.8)
    \rput(0,0.14){ 
      \genusvertex{(0,0)}{v1}{1}
      \genusvertex{(1,0)}{v2}{3}
      \genusvertex{(2,0)}{v3}{1}
      \ncline[arrows=->]{v1}{v2}
      \ncline{v2}{v3}
    }
  \end{pspicture}
  \right]
  +17
  \left[
  \begin{pspicture}(-.3,0)(2.3,.8)
    \rput(0,0.14){ 
      \genusvertex{(0,0)}{v1}{3}
      \genusvertex{(1,0)}{v2}{1}
      \genusvertex{(2,0)}{v3}{1}
      \rput(0.3,0.4){\small$\kappa_1$}
      \ncline{v1}{v2}
      \ncline{v2}{v3}
    }
  \end{pspicture}
  \right]
  -84
  \left[
  \begin{pspicture}(-.3,0)(2.3,.8)
    \rput(0,0.14){ 
      \genusvertex{(0,0)}{v1}{3}
      \genusvertex{(1,0)}{v2}{1}
      \genusvertex{(2,0)}{v3}{1}
      \ncline[arrows=<-]{v1}{v2}
      \ncline{v2}{v3}
    }
  \end{pspicture}
  \right]
  \\
  +\frac{507}{7}
  \left[
  \begin{pspicture}(-.3,0)(2.3,.8)
    \rput(0,0.14){ 
      \genusvertex{(0,0)}{v1}{2}
      \genusvertex{(1,0)}{v2}{2}
      \genusvertex{(2,0)}{v3}{1}
      \ncline[arrows=<-]{v1}{v2}
      \ncline{v2}{v3}
    }
  \end{pspicture}
  \right]
  -51
  \left[
  \begin{pspicture}(-.3,0)(2.3,.8)
    \rput(0,0.14){ 
      \genusvertex{(0,0)}{v1}{2}
      \genusvertex{(1,0)}{v2}{2}
      \genusvertex{(2,0)}{v3}{1}
      \ncline[arrows=->]{v1}{v2}
      \ncline{v2}{v3}
    }
  \end{pspicture}
  \right]
  +\frac{160}{7}
  \left[
  \begin{pspicture}(-.3,0)(2.3,.8)
    \rput(0,0.14){ 
      \genusvertex{(0,0)}{v1}{2}
      \genusvertex{(1,0)}{v2}{2}
      \genusvertex{(2,0)}{v3}{1}
      \ncline{v1}{v2}
      \ncline[arrows=<-]{v2}{v3}
    }
  \end{pspicture}
  \right]
  +\frac{190}{7}
  \left[
  \begin{pspicture}(-.3,0)(2.3,.8)
    \rput(0,0.14){ 
      \genusvertex{(0,0)}{v1}{2}
      \genusvertex{(1,0)}{v2}{1}
      \genusvertex{(2,0)}{v3}{2}
      \ncline[arrows=<-]{v1}{v2}
      \ncline{v2}{v3}
    }
  \end{pspicture}
  \right]
  \\
  +\frac{63}{5}
  \left[
  \begin{pspicture}(-.3,0)(2.3,1.2)
    \rput(0,0.14){ 
      \genusvertex{(0,.5)}{v1}{3}
      \genusvertex{(0,-.5)}{v2}{1}
      \cnode*(1,0){\nr}{v3}
      \genusvertex{(2,0)}{v4}{1}
      \ncline[arrows=-]{v1}{v3}
      \ncline[arrows=-]{v2}{v3}
      \ncline[arrows=-]{v3}{v4}
    }
  \end{pspicture}
  \right]
  -\frac{400}{7}
  \left[
  \begin{pspicture}(-.3,0)(2.3,1.2)
    \rput(0,0.14){ 
      \genusvertex{(0,.5)}{v1}{2}
      \genusvertex{(0,-.5)}{v2}{2}
      \cnode*(1,0){\nr}{v3}
      \genusvertex{(2,0)}{v4}{1}
      \ncline[arrows=-]{v1}{v3}
      \ncline[arrows=-]{v2}{v3}
      \ncline[arrows=-]{v3}{v4}
    }
  \end{pspicture}
  \right]
  +\frac{44}{7}
  \left[
  \begin{pspicture}(-.3,0)(2.3,1.2)
    \rput(0,0.14){ 
      \genusvertex{(0,.5)}{v1}{1}
      \genusvertex{(0,-.5)}{v2}{1}
      \genusvertex{(1,0)}{v3}{2}
      \genusvertex{(2,0)}{v4}{1}
      \ncline[arrows=-]{v1}{v3}
      \ncline[arrows=-]{v2}{v3}
      \ncline[arrows=-]{v3}{v4}
    }
  \end{pspicture}
  \right]
  -\frac{4}{7}
  \left[
  \begin{pspicture}(-.3,0)(2.3,1.2)
    \rput(0,0.14){ 
      \genusvertex{(0,.5)}{v1}{2}
      \genusvertex{(0,-.5)}{v2}{1}
      \genusvertex{(1,0)}{v3}{1}
      \genusvertex{(2,0)}{v4}{1}
      \ncline[arrows=-]{v1}{v3}
      \ncline[arrows=-]{v2}{v3}
      \ncline[arrows=-]{v3}{v4}
    }
  \end{pspicture}
  \right]
  \\
  -\frac{141}{7}
  \left[
  \begin{pspicture}(-.3,0)(3.3,.8)
    \rput(0,0.14){ 
      \genusvertex{(0,0)}{v1}{1}
      \genusvertex{(1,0)}{v2}{2}
      \genusvertex{(2,0)}{v3}{1}
      \genusvertex{(3,0)}{v4}{1}
      \ncline[arrows=-]{v1}{v2}
      \ncline[arrows=-]{v2}{v3}
      \ncline[arrows=-]{v3}{v4}
    }
  \end{pspicture}
  \right]
  +
  \frac{23}{7}
  \left[
  \begin{pspicture}(-.3,0)(3.3,.8)
    \rput(0,0.14){ 
      \genusvertex{(0,0)}{v1}{2}
      \genusvertex{(1,0)}{v2}{1}
      \genusvertex{(2,0)}{v3}{1}
      \genusvertex{(3,0)}{v4}{1}
      \ncline[arrows=-]{v1}{v2}
      \ncline[arrows=-]{v2}{v3}
      \ncline[arrows=-]{v3}{v4}
    }
  \end{pspicture}
  \right]
\end{multline}
\end{conjecture}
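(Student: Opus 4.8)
The plan is to run, one genus higher, the same argument that establishes Proposition~\ref{pr:m4relation}, with the integral over $\Mbar_4$ replaced by the $\lambda_5$-evaluation on $\Mct_5$. First I would enumerate the decorated stable graphs of genus $5$ with no marked points whose vertex decorations satisfy inequality~\eqref{eq:lowdegree}; in codimension $3$ there are $31$ of them, and by the remark following Proposition~\ref{prop:Phi} the classes $\sigma_G$ they represent span $R^3(\Mct_5)$. Next I would pull back, along the gluing morphisms $\iota_{\Gbar}$, the relations already available in lower genus: the relation $\kappa_1-\psi_1=\tfrac{7}{5}\delta_G$ on $\M_{2,1}$ from~\eqref{eq:m21}, its analogue on $\Mct_{2,2}$, the three relations in $R^2(\M_{3,1})$, and the new genus $4$ relation~\eqref{eq:m4relation}. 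Using exactness of the tautological restriction sequence~\eqref{eq:restrict} in the relevant degrees, each of these pullbacks extends to an honest relation on $\Mct_5$, and a linear-algebra reduction eliminates $11$ of the $31$ graphs, leaving the $20$ spanning classes referred to in the text.

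The computational heart of the matter is the intersection pairing
\[
  R^3(\Mct_5)\times R^4(\Mct_5)\longrightarrow \Qq,\qquad
  (\alpha,\beta)\longmapsto \int_{\Mbar_5}\alpha\,\beta\,\lambda_5,
\]
whose target $R^{7}(\Mct_5)\cong\Qq$ is known unconditionally. Using the multiplication rule~\eqref{multrule} in $\Ss^\ast$ together with the surjection $\Phi$ of Proposition~\ref{prop:Phi}, each product $\sigma_G\cdot\sigma_{G'}$ is rewritten as a $\Qq$-combination of decorated graphs in codimension $7$, and the resulting top intersection numbers are then evaluated (as integrals of monomials in $\psi$, $\kappa$, $\lambda$ over the vertex moduli spaces, assembled by $\iota_{\Gbar\ast}$) by the Maple implementation of the algorithm described in the previous sections. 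Carrying this out for the $20$ spanning classes against a generating set of $R^4(\Mct_5)$ produces a matrix of rank $19$; its one-dimensional kernel on the degree-$3$ side is, after clearing denominators, exactly the vector of coefficients displayed in the statement. This establishes the relation in the Gorenstein quotient of $R^\ast(\Mct_5)$.

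The step I expect to be the real obstacle is promoting this from a relation in the Gorenstein quotient to a relation in $R^3(\Mct_5)$ itself, i.e.\ showing that the corresponding class genuinely lies in $\ker\Phi$. In the genus $4$ case this promotion was available because the Bergstr\"om--Tommasi computation of $H^\ast(\Mbar_4)$ shows that cohomology is tautological and coincides with its Gorenstein quotient, so the kernel is forced to contain the class in question; no comparable cohomological input is presently available for $\Mbar_5$ or $\Mct_5$, and the conjectured Gorenstein property of $R^\ast(\Mct_5)$ is exactly what would be needed to close the gap. This is why the statement is recorded only as a conjecture. An unconditional proof would instead require an independent construction of the relation --- for instance from the Faber--Zagier/Pixton relations, or by a stable-maps degeneration argument of the type Faber and Pandharipande used to reprove~\eqref{eq:m4relation}.
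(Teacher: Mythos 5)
Your proposal matches the paper exactly: the paper offers no proof of this statement, recording it only as a conjecture supported by the same evidence you describe --- the enumeration of $31$ graphs in $\Sigma^3(\Mct_5)$, the elimination of eleven via pulled-back lower-genus relations, and the rank-$19$ intersection pairing $R^3\times R^4$ on the remaining twenty classes, whose one-dimensional kernel yields the displayed coefficients. You have also correctly identified why it remains a conjecture, namely that without a cohomological input for genus $5$ analogous to the Bergstr\"om--Tommasi computation used for Proposition~\ref{pr:m4relation}, the relation is only established in the Gorenstein quotient rather than in $R^3(\Mct_5)$ itself.
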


\section{Gorenstein quotients of tautological rings}

Below we display the ranks of the intersection pairing on
$\Mbar_{g,n}$, $\Mct_{g,n}$, and $\Mrt_{g,n}$, respectively. These
were calculated using Proposition 1, where $\Phi$ is restricted to the
finite set of graphs satisfying inequality~\eqref{eq:lowdegree}.

\hspace*{\fill}

\renewcommand{\arraystretch}{1}
\begin{center}
\begin{tabular}{c||cccccccccc}
\multicolumn{11}{c}{Rank of the intersection pairing on $\Mbar_{g,n}$}\\    
\multicolumn{11}{c}{\scriptsize\sc Codimension}\\    
$(g,n)$ &0 &  1 &   2 &   3 &   4 &   5 &   6 &   7 &  8 & 9\\\hline
$(0,3)$ &1 &    &     &     &     &     &     &     &    &  \\ 
$(0,4)$ &1 &  1 &     &     &     &     &     &     &    &  \\ 
$(0,5)$ &1 &  5 &   1 &     &     &     &     &     &    &  \\ 
$(0,6)$ &1 & 16 &  16 &   1 &     &     &     &     &    &  \\ 
$(0,7)$ &1 & 42 & 127 &  42 &   1 &     &     &     &    &  \\
\hline
$(1,1)$ &1 &  1 &     &     &     &     &     &     &    &  \\ 
$(1,2)$ &1 &  2 &   1 &     &     &     &     &     &    &  \\ 
$(1,3)$ &1 &  5 &   5 &   1 &     &     &     &     &    &  \\ 
$(1,4)$ &1 & 12 &  23 &  12 &   1 &     &     &     &    &  \\ 
$(1,5)$ &1 & 27 & 102 & 102 &  27 &   1 &     &     &    &  \\ 
\hline
$(2,0)$ &1 &  2 &   2 &   1 &     &     &     &     &    &  \\ 
$(2,1)$ &1 &  3 &   5 &   3 &   1 &     &     &     &    &  \\ 
$(2,2)$ &1 &  6 &  14 &  14 &   6 &   1 &     &     &    &  \\ 
$(2,3)$ &1 & 12 &  44 &  67 &  44 &  12 &   1 &     &    &  \\ 
$(2,4)$ &1 & 24 & 144 & 333 & 333 & 144 &  24 &   1 &    &  \\
\hline
$(3,0)$ &1 &  3 &   7 &  10 &   7 &   3 &   1 &     &    &  \\ 
$(3,1)$ &1 &  5 &  16 &  29 &  29 &  16 &   5 &   1 &    &  \\ 
$(3,2)$ &1 &  9 &  42 & 104 & 142 & 104 &  42 &   9 &  1 &  \\
\hline
$(4,0)$ &1 &  4 &  13 &  32 &  50 &  50 &  32 &  13 &  4 & 1\\  
\end{tabular}
\end{center}

\hspace*{\fill}

\newpage

\begin{center}
\begin{tabular}{c||cccccccc}
\multicolumn{9}{c}{Rank of the intersection pairing on $\Mct_{g,n}$}\\    
\multicolumn{9}{c}{\scriptsize\sc Codimension}\\    
$(g,n)$ & 0 &  1 &   2 &   3 &   4 &   5 &   6 &   7\\
\hline
$(1,1)$ & 1 &    &     &     &     &     &     &    \\ 
$(1,2)$ & 1 &  1 &     &     &     &     &     &    \\ 
$(1,3)$ & 1 &  4 &   1 &     &     &     &     &    \\ 
$(1,4)$ & 1 & 11 &  11 &   1 &     &     &     &    \\ 
$(1,5)$ & 1 & 26 &  71 &  26 &   1 &     &     &    \\ 
$(1,6)$ & 1 & 57 & 348 & 348 &  57 &   1 &     &    \\ 
\hline
$(2,0)$ & 1 &  1 &     &     &     &     &     &    \\ 
$(2,1)$ & 1 &  2 &   1 &     &     &     &     &    \\ 
$(2,2)$ & 1 &  5 &   5 &   1 &     &     &     &    \\ 
$(2,3)$ & 1 & 11 &  24 &  11 &   1 &     &     &    \\ 
$(2,4)$ & 1 & 23 & 101 & 101 &  23 &   1 &     &    \\ 
$(2,5)$ & 1 & 47 & 384 & 769 & 384 &  47 &   1 &    \\ 
\hline
$(3,0)$ & 1 &  2 &   2 &   1 &     &     &     &    \\ 
$(3,1)$ & 1 &  4 &   7 &   4 &   1 &     &     &    \\ 
$(3,2)$ & 1 &  8 &  24 &  24 &   8 &   1 &     &    \\ 
$(3,3)$ & 1 & 16 &  82 & 144 &  82 &  16 &   1 &    \\ 
$(3,4)$ & 1 & 32 & 274 & 813 & 813 & 274 &  32 &   1\\ 
\hline
$(4,0)$ & 1 &  3 &   6 &   6 &   3 &   1 &     &    \\ 
$(4,1)$ & 1 &  5 &  17 &  25 &  17 &   5 &   1 &    \\ 
$(4,2)$ & 1 & 10 &  51 & 120 & 120 &  51 &  10 &   1\\ 
\hline
$(5,0)$ & 1 &  3 &  10 &  19 &  19 &  10 &   3 &   1\\ 
\end{tabular}
\end{center}

\begin{center}
\begin{tabular}{c||ccccccccc}
\multicolumn{8}{c}{Rank of the intersection pairing on $\Mrt_{g,n}$}\\    
\multicolumn{8}{c}{\scriptsize\sc Degree}\\    
$(g,n)$ & 0 &  1 &   2 &   3 &   4 &   5 &   6 &  7 &  8  \\
\hline
$(2,0)$ & 1 &    &     &     &     &     &     &    &     \\ 
$(2,1)$ & 1 &  1 &     &     &     &     &     &    &     \\ 
$(2,2)$ & 1 &  3 &   1 &     &     &     &     &    &     \\ 
$(2,3)$ & 1 &  7 &   7 &   1 &     &     &     &    &     \\ 
$(2,4)$ & 1 & 15 &  35 &  15 &   1 &     &     &    &     \\ 
$(2,5)$ & 1 & 31 & 147 & 147 &  31 &   1 &     &    &     \\ 
$(2,6)$ & 1 & 63 & 556 &1126 & 556 &  63 &   1 &    &     \\ 
\hline
$(3,0)$ & 1 &  1 &     &     &     &     &     &    &     \\ 
$(3,1)$ & 1 &  2 &   1 &     &     &     &     &    &     \\ 
$(3,2)$ & 1 &  4 &   4 &   1 &     &     &     &    &     \\ 
$(3,3)$ & 1 &  8 &  15 &   8 &   1 &     &     &    &     \\ 
$(3,4)$ & 1 & 16 &  54 &  54 &  16 &   1 &     &    &     \\ 
$(3,5)$ & 1 & 32 & 188 & 333 & 188 &  32 &   1 &    &     \\ 
\hline
$(4,0)$ & 1 &  1 &   1 &     &     &     &     &    &     \\ 
$(4,1)$ & 1 &  2 &   2 &   1 &     &     &     &    &     \\ 
$(4,2)$ & 1 &  4 &   6 &   4 &   1 &     &     &    &     \\ 
$(4,3)$ & 1 &  8 &  19 &  19 &   8 &   1 &     &    &     \\ 
$(4,4)$ & 1 & 16 &  61 &  95 &  61 &  16 &   1 &    &     \\ 
$(4,5)$ & 1 & 32 & 199 & 470 & 470 & 199 &  32 &  1 &     \\ 
\hline
$(5,0)$ & 1 &  1 &   1 &   1 &     &     &     &    &     \\ 
$(5,1)$ & 1 &  2 &   3 &   2 &   1 &     &     &    &     \\ 
$(5,2)$ & 1 &  4 &   8 &   8 &   4 &   1 &     &    &     \\ 
$(5,3)$ & 1 &  8 &  22 &  33 &  22 &   8 &   1 &    &     \\ 
$(5,4)$ & 1 & 16 &  65 & 136 & 136 &  65 &  16 &  1 &     \\ 
$(5,5)$ & 1 & 32 & 204 & 577 & 852 & 577 & 204 & 32 &   1 \\ 
\hline 
$(6,0)$ & 1 &  1 &   2 &   1 &   1 &     &     &    &     \\ 
$(6,1)$ & 1 &  2 &   4 &   4 &   2 &   1 &     &    &     \\ 
$(6,2)$ & 1 &  4 &   9 &  13 &   9 &   4 &   1 &    &     \\ 
$(6,3)$ & 1 &  8 &  23 &  44 &  44 &  23 &   8 &  1 &     \\ 
$(6,4)$ & 1 & 16 &  66 & 159 & 226 & 159 &  66 & 16 &  1  \\ 
\hline 
$(7,0)$ & 1 &  1 &   2 &   2 &   1 &   1 &     &    &     \\ 
$(7,1)$ & 1 &  2 &   4 &   5 &   4 &   2 &   1 &    &     \\ 
\end{tabular}
\end{center}

\newpage

\begin{biblist}
\bib{MR1486986}{article}{
   author={Arbarello, Enrico},
   author={Cornalba, Maurizio},
   title={Combinatorial and algebro-geometric cohomology classes on the moduli spaces of curves},
   journal={J. Algebraic Geom.},
   volume={5},
   date={1996},
   number={4},
   pages={705--749},
   issn={1056-3911},
   review={\MR{1486986(99c:14033)}},
}
%
\bib{MR2295510}{article}{
   author={Bergstr{\"o}m, Jonas},
   author={Tommasi, Orsola},
   title={The rational cohomology of $\overline{\scr M}\sb 4$},
   journal={Math. Ann.},
   volume={338},
   date={2007},
   number={1},
   pages={207--239},
   issn={0025-5831},
   review={\MR{2295510 (2008b:14043)}},
}
\bib{MR1714822}{article}{
   author={Faber, Carel},
   title={Algorithms for computing intersection numbers on moduli spaces of
   curves, with an application to the class of the locus of Jacobians},
   conference={
      title={New trends in algebraic geometry},
      address={Warwick},
      date={1996},
   },
   book={
      series={London Math. Soc. Lecture Note Ser.},
      volume={264},
      publisher={Cambridge Univ. Press},
      place={Cambridge},
   },
   date={1999},
   pages={93--109},
   review={\MR{1714822 (2000m:14032)}},
}
\bib{MR1722541}{article}{
   author={Faber, Carel},
   title={A conjectural description of the tautological ring of the moduli
   space of curves},
   conference={
      title={Moduli of curves and abelian varieties},
   },
   book={
      series={Aspects Math., E33},
      publisher={Vieweg},
      place={Braunschweig},
   },
   date={1999},
   pages={109--129},
   review={\MR{1722541 (2000j:14044)}},
}
%
\bib{KaLa5}{article}{
   author={Faber, Carel},
   title={{\sc Maple} program for computing Hodge integrals},
   note={Available at {\tt http://math.stanford.edu/\~vakil/programs}},
}
\bib{carel}{article}{
   author={Faber, Carel},
   note={Personal communication},
}
\bib{MR2120989}{article}{
   author={Faber, C.},
   author={Pandharipande, R.},
   title={Relative maps and tautological classes},
   journal={J. Eur. Math. Soc. (JEMS)},
   volume={7},
   date={2005},
   number={1},
   pages={13--49},
   issn={1435-9855},
   review={\MR{2120989 (2005m:14046)}},
}
\bib{MR1644323}{book}{
   author={Fulton, William},
   title={Intersection theory},
   series={Ergebnisse der Mathematik und ihrer Grenzgebiete. 3. Folge. A
   Series of Modern Surveys in Mathematics [Results in Mathematics and
   Related Areas. 3rd Series. A Series of Modern Surveys in Mathematics]},
   volume={2},
   edition={2},
   publisher={Springer-Verlag},
   place={Berlin},
   date={1998},
   pages={xiv+470},
   isbn={3-540-62046-X},
   isbn={0-387-98549-2},
   review={\MR{1644323 (99d:14003)}},
}
\bib{MR1960923}{article}{
   author={Graber, T.},
   author={Pandharipande, R.},
   title={Constructions of nontautological classes on moduli spaces of
   curves},
   journal={Michigan Math. J.},
   volume={51},
   date={2003},
   number={1},
   pages={93--109},
   issn={0026-2285},
   review={\MR{1960923 (2004e:14043)}},
}
\bib{MR2176546}{article}{
   author={Graber, Tom},
   author={Vakil, Ravi},
   title={Relative virtual localization and vanishing of tautological
   classes on moduli spaces of curves},
   journal={Duke Math. J.},
   volume={130},
   date={2005},
   number={1},
   pages={1--37},
   issn={0012-7094},
   review={\MR{2176546 (2006j:14035)}},
   doi={10.1215/S0012-7094-05-13011-3},
}
\bib{MR2003030}{book}{
   author={Hori, Kentaro},
   author={Katz, Sheldon},
   author={Klemm, Albrecht},
   author={Pandharipande, Rahul},
   author={Thomas, Richard},
   author={Vafa, Cumrun},
   author={Vakil, Ravi},
   author={Zaslow, Eric},
   title={Mirror symmetry},
   series={Clay Mathematics Monographs},
   volume={1},
   note={With a preface by Vafa},
   publisher={American Mathematical Society},
   place={Providence, RI},
   date={2003},
   pages={xx+929},
   isbn={0-8218-2955-6},
   review={\MR{2003030 (2004g:14042)}},
}
%
\bib{MR1346214}{article}{
   author={Looijenga, Eduard},
   title={On the tautological ring of ${\scr M}\sb g$},
   journal={Invent. Math.},
   volume={121},
   date={1995},
   number={2},
   pages={411--419},
   issn={0020-9910},
   review={\MR{1346214 (96g:14021)}},
   doi={10.1007/BF01884306},
}
\bib{MR717614}{article}{
   author={Mumford, David},
   title={Towards an enumerative geometry of the moduli space of curves},
   conference={
      title={Arithmetic and geometry, Vol. II},
   },
   book={
      series={Progr. Math.},
      volume={36},
      publisher={Birkh\"auser Boston},
      place={Boston, MA},
   },
   date={1983},
   pages={271--328},
   review={\MR{717614 (85j:14046)}},
}
\bib{MR1957060}{article}{
   author={Pandharipande, R.},
   title={Three questions in Gromov-Witten theory},
   conference={
      title={},
      address={Beijing},
      date={2002},
   },
   book={
      publisher={Higher Ed. Press},
      place={Beijing},
   },
   date={2002},
   pages={503--512},
   review={\MR{1957060 (2003k:14069)}},
}
\bib{HodgeIntegrals}{article}{
   author={Smith,Greg},
   author={Yang,Stephanie},
   title={HodgeIntegrals},
   note={Macaulay2 package for computing Hodge Integrals},
}
\bib{MR2134272}{article}{
   author={Tommasi, Orsola},
   title={Rational cohomology of the moduli space of genus 4 curves},
   journal={Compos. Math.},
   volume={141},
   date={2005},
   number={2},
   pages={359--384},
   issn={0010-437X},
   review={\MR{2134272 (2006c:14043)}},
}
\bib{knut}{article}{
   author={Yang, Stephanie},
   title = {\rm {\sc Maple} program for computing integrals on $\Mbar_{g,n}$},
   note = {Available by request from author {\tt stpyang@math.kth.se}}
}
\end{biblist}

\end{document}